\theoremstyle{plain}
\newtheorem{theorem}{Theorem}
\newtheorem{lemma}[theorem]{Lemma}
\newtheorem{corollary}[theorem]{Corollary}
\newtheorem{proposition}[theorem]{Proposition}
\theoremstyle{definition}
\newtheorem{definition}[theorem]{Definition}
\theoremstyle{remark}
\DeclareMathOperator{\esup}{ess\,sup}
\newcommand{\R}{\mathbb{R}}
\newcommand{\RN}{\mathbb{R}^n}
\newcommand{\RH}{\hat{\mathbb{R}}^n}
\newcommand{\N}{\mathbb{N}}
\newcommand{\Leb}{\mathcal{L}}
\newcommand{\M}{\mathcal{M}}
\newcommand{\supp}{\mathrm{supp}}
\newcommand{\B}{\mathcal{B}}
\newcommand{\Langle}{\big\langle}
\newcommand{\Rangle}{\big\rangle}
\newcommand{\di}{\partial}
\newcommand{\diver}{\mathrm{div}}
\begin{document}
\thispagestyle{empty}

\title[Well-posedness for continuity equation]{Well-posedness for the continuity equation for vector fields with suitable modulus of continuity}
\author{Albert Clop}
\author{Heikki Jylh\"a}
\author{Joan Mateu}
\author{Joan Orobitg}
\date{\today}

\begin{abstract}
We prove well-posedness of linear scalar conservation laws using only assumptions on the growth and the modulus of continuity of the velocity field, but not on its divergence. As an application, we obtain uniqueness of solutions in the atomic Hardy space, $H^1$, for the scalar conservation law induced by a class of vector fields whose divergence is an unbounded $BMO$ function.
\end{abstract}

\maketitle

\section{Introduction}

\noindent
The scalar continuity equation
\begin{equation}\label{ce}
\begin{cases}
\frac{d}{dt}\, \rho + \diver(b\,\rho)=0\\ \rho(0,\cdot)= \rho_0
\end{cases}
\end{equation} 
appears in many conservation phenomena in nature. Among the most representative ones, we mention the so called aggregation equation (see for instance \cite{BLL}), which describes the time evolution of an active scalar $\rho(t,\cdot)$ with initial state $\rho_0$, under a velocity field
$$ b(t,\cdot)=K\ast \rho(t,\cdot).$$
The dependence of $b$ with respect to the unknown $u(t,\cdot)$ makes the aggregation equation nonlinear, while the analytic nature of the kernel $K=\nabla N$ ($N$ is the Newtonian potential) gives it a gradient flow structure.  In the present paper, though, we omit both nonlinearities and gradient flow structure, and focus our attention on much simpler linear case. \\
\\
It is known by the so-called superposition principle (see for instance \cite{AC}) that if a vector field $b$ admits a unique flow $\phi$ of trajectories $t\mapsto\phi_t(x)$ then non-negative solutions to \eqref{ce} are uniquely determined by $\rho_0$. For this reason, in the present paper we focus on the much more subtle question of signed solutions. \\
\\
Having a mass conservation structure, one may think that continuity equations have $L^1$ as the natural function space for solutions. It is then reasonable to think its adjoint equation, also known as \emph{transport equation},
$$
\begin{cases}
\frac{d}{dt}\, \omega + b\cdot\nabla\omega=0,\\
\omega(0,\cdot)=\omega_0.
\end{cases}
$$
in the $L^\infty$ setting. Indeed, the classical theory by DiPerna and Lions \cite{DPL}, together with Ambrosio's more recent developments \cite{A}, provides an existence, uniqueness and stability theory for solutions $u\in L^\infty(0,T;L^\infty)$ for any Sobolev vector field $b$ which, among other assumptions, satisfies $\diver(b)\in L^\infty$. There is, though, an increasing interest in removing both boundedness assumptions on the divergence \cite{CC, CJMO, CJMO2, CCS, Mu} as well as on the datum \cite{CJMO3}. In this setting, the Euler system of equations deserves a special mention. Its scalar formulation in the plane,
\begin{equation}\label{vorticity}
\frac{d\omega}{dt} + v\cdot \nabla\omega=0,\hspace{1cm}\text{where }v(t,\cdot)=\frac{i}{2\pi\overline{z}}\ast\omega(t,\cdot),
\end{equation}
has the structure of a nonlinear transport equation, together with the initial condition $v(0,\cdot)=\frac{i}{2\pi \overline{z}}\ast \omega(0,\cdot)$. After Yudovich's proof of global existence and uniqueness of solutions $\omega\in L^\infty(0,T;L^\infty)$ for $\omega_0\in L^1\cap L^\infty$ \cite{Y}, there has been many attempts to understand the case of unbounded vorticities. Particular attention is devoted to spaces that stay close to $BMO$, the space of functions of \emph{bounded mean oscillation}, which arises naturally since it contains the image of $L^\infty$ under any Calder\'on-Zygmund singular integral operator. It is conjectured that well-posedness of \eqref{vorticity} fails in $BMO$.  Also, strong ill-posedness has been proven in certain Sobolev spaces included in $BMO$ \cite{BLi}. However, there exist classes of unbounded vorticities for which \eqref{vorticity} is well-posed \cite{BH,BK}. In a similar way, the present paper deals with the well-posedness of  \eqref{ce} in the atomic Hardy space $H^1$, which is the predual of $BMO$, and which consists of $L^1$ functions having $L^1$ image under any Calder\'on-Zygmund operator. \\
\\
Cauchy problems for the linear continuity equation with non-smooth velocity fields were successfully treated with the DiPerna-Lions scheme and the notion of renormalized solution, as well as the more recent extensions by Ambrosio in the $BV$ setting. In both approaches, the starting point is the classical Cauchy-Lipschitz theory, which allows to write the solution $\rho=\rho(t,x)$ of 
\begin{equation}\label{linearce}
\begin{cases}
\frac{d}{dt}\,\rho +\diver(b\, \rho) =0\\
\rho(0,\cdot)=\rho_0
\end{cases}
\end{equation}
as the adjoint composition operator,
\begin{equation}\label{transport}
\rho(t,x)=\rho_0\circ\phi_t^{-1}(x)\,J(x,\phi_t^{-1})
\end{equation}
where $\phi_t:\R^n\to\R^n$ is the flow generated by the velocity field $b$,
\begin{equation}\label{flowODE}
\begin{cases}
\dot{\phi_t}(x)=b(t, \phi_t(x)),\\\phi_0(x)=x,
\end{cases}
\end{equation}
and $J(x,\phi_t^{-1})$ denotes the jacobian determinant of the inverse flow $\phi_t^{-1}$ at the point $x$, at least for smooth enough  $b$. Abusing of the classical mass transport notation, one could equivalently write $\rho(t,\cdot)=(\phi_t)_\sharp \rho_0$. Towards finding explicit solutions $\rho\in L^\infty(0,T; H^1(\R^n))$ of the problem \eqref{linearce} for a given $\rho_0\in H^1(\R^n)$, there are two things to be analyzed. First, describing the class $\mathcal{Q}$  of diffeomorphisms $\phi_t$ under which \eqref{transport} defines a bounded operator in $H^1(\R^n)$. Second, describing the class of velocity fields $b$ such that \eqref{flowODE} has a solution $\phi_t$ that falls into $\mathcal{Q}$. Both questions drive us naturally to Reimann, who partially solved them both in two papers in the 70's \cite{Re, Re2}. In the first one, quasiconformality was found to be the fundamental notion. In the second, uniform bounds for the anticonformal part of $Db$ were proven to be enough. Let us recall that the anticonformal part of $Db$ is, by definition,
$$S_Ab(t,x)=\frac{Db(t,x)+D^Tb(t,x)}{2} -\frac{\diver b(t,x)}{n}\,I_n,$$
where $I_n$ denotes the $n$-dimensional identity matrix.

\begin{theorem}\label{mainH1}
Let $b:[0,T)\times\R^n\to\R^n$ be a vector field, $b\in L^1(0,T; W^{1,1}_{loc}(\R^n))$, such that $S_Ab\in L^1(0,T; L^\infty(\R^n))$. Assume also that
$$
\frac{|b(t,x)|}{1+|x|\,\log(e+|x|)}\in L^1(0,T; L^\infty(\R^n)).
$$ 
For every $h_0\in H^1(\R^n)$, the problem
\begin{equation}\label{H1problem}
\left\{ 
\begin{array}{rll}
\di_t h +\diver(b h) & =0 & \textrm{in}\; (0,T)\times\RN \\
h(0,\cdot) & =h_0 & \textrm{in}\; \RN.
\end{array}\right.
\end{equation}
admits a unique weak solution $h\in L^\infty(0,T; H^1(\R^n))$. 
\end{theorem}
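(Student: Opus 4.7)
\emph{Plan.} The strategy is to construct the unique solution explicitly via the Lagrangian representation \eqref{transport}, and to read off the $H^1$-regularity from Reimann's composition theory for quasiconformal mappings; uniqueness will be a consequence of the paper's general $L^1$ uniqueness theorem for signed solutions.

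First, under the log-linear growth condition, the ODE \eqref{flowODE} admits a unique globally defined flow $\phi_t:\R^n\to\R^n$ for $t\in[0,T)$, by Osgood-type comparison since $r\mapsto r\log(e+r)$ is an Osgood modulus. The extra assumption $S_Ab\in L^1(0,T;L^\infty(\R^n))$ enters through the observation of Reimann \cite{Re2}: it forces the $\phi_t$ to be quasiconformal diffeomorphisms whose dilatation admits the uniform bound
$$
K(\phi_t)\le \exp\Bigl(C\int_0^t\|S_Ab(s,\cdot)\|_{L^\infty}\,ds\Bigr), \qquad t\in[0,T).
$$
Crucially, only $S_Ab$ needs to be $L^\infty$, so no information on $\diver b$ is required.

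With the flow in hand, define the candidate
$$
h(t,x)=h_0\bigl(\phi_t^{-1}(x)\bigr)\,J(x,\phi_t^{-1}),
$$
which is the pushforward $(\phi_t)_\sharp h_0$. By Reimann's $H^1$-theorem \cite{Re}, pushforward under a quasiconformal diffeomorphism is a bounded operator on $H^1(\R^n)$ with norm controlled by the dilatation; hence $h\in L^\infty(0,T;H^1(\R^n))$ with a quantitative estimate $\|h\|_{L^\infty_tH^1_x}\le C\|h_0\|_{H^1}$ depending only on $T$, the growth constant for $b$, and $\|S_Ab\|_{L^1_tL^\infty_x}$. That $h$ is a distributional solution of \eqref{H1problem} is a direct consequence of \eqref{flowODE} and the change-of-variables formula: testing against $\varphi\in C^\infty_c((0,T)\times\R^n)$, one moves everything to the Lagrangian side via the substitution $y=\phi_t^{-1}(x)$ and differentiates in $t$ by the chain rule, the boundary terms vanishing due to compact support.

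For uniqueness, I would appeal to the general uniqueness result for signed $L^1$ solutions of \eqref{linearce} established earlier in the paper, whose hypotheses are precisely the growth and modulus-of-continuity conditions stated here (and in particular require no bound on $\diver b$). Any $h\in L^\infty(0,T;H^1(\R^n))$ is a fortiori in $L^\infty(0,T;L^1(\R^n))$, so two such solutions with the same datum must coincide.

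The principal obstacle is the $H^1$-bound on the pushforward: formula \eqref{transport} preserves $L^1$ trivially but not $H^1$, and one genuinely needs the nontrivial content of Reimann's theorem, namely that the sole quasiconformality of $\phi_t$, with controlled dilatation, suffices to preserve the cancellation structure encoded in the atomic decomposition of $H^1$. A secondary technical point is that the uniform-in-$t$ control of $K(\phi_t)$ depends on having an $L^1_tL^\infty_x$ bound on $S_Ab$ rather than on $Db$ itself, which is exactly why the trace-free symmetric part is the natural object in this statement.
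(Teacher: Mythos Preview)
Your approach is essentially the paper's: build the quasiconformal flow from Reimann's criterion on $S_Ab$, push forward $h_0$, invoke the $H^1$-boundedness of the transport operator under quasiconformal maps (which the paper obtains as Corollary~\ref{H1sufficient} by dualizing Reimann's $BMO$ result rather than citing an $H^1$ statement directly), and then appeal to the general uniqueness theorem.

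One point deserves care. For uniqueness you invoke ``the general uniqueness result for signed $L^1$ solutions,'' which suggests Proposition~\ref{mainPROP} or \ref{moregeneralPROP}. The paper explicitly warns that this is \emph{not} enough: the hypotheses of Theorem~\ref{mainH1} do not guarantee the global modulus condition~\ref{b2s}, because $S_Ab\in L^\infty$ together with the growth bound only yields a \emph{local} log-Lipschitz estimate whose constant may depend on the ball $B(0,R)$. The paper therefore routes uniqueness through Theorem~\ref{mainTHM}, whose hypothesis~\ref{b2} is local and whose solutions are measure-valued; this is harmless since $L^\infty(0,T;H^1)\subset L^\infty(0,T;\mathcal{M})$. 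You should also note that the time-integrability in Theorem~\ref{mainH1} is $L^1(0,T)$ rather than $L^\infty(0,T)$, so one needs the remark following Theorem~\ref{mainTHM} that the $L^\infty$-in-time requirement in \ref{b1}--\ref{b2} can be relaxed to $L^1$ when the solution is already known to lie in $L^\infty(0,T;\mathcal{M})$.
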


\noindent
The essential point for existence is the fact that quasiconformal maps transport boundedly measures with $H^1(\R^n)$ density into themselves. For this, we rely on previous results by Reimann \cite{Re,CJMO3} and 
Fefferman's $H^1-BMO$ duality. \\
\\
In the proof of uniqueness in Theorem \ref{mainH1}, most of the available literature cannot be used, due to the unboundedness of $\diver(b)$. However, we remark two exceptions. First, a work by  Ambrosio - Bernard \cite{AB}, where uniqueness is obtained from diagonal modulus of continuity assumptions on $b$, together with global boundedness. Unfortunately, this was not enough for proving Theorem \ref{mainH1}, which includes many unbounded vector fields. We found a second exception in a paper by Seis \cite{Seis}, which provides a detailed stability estimate for solutions $u\in L^1(0,T;L^q(\R^n))$ of continuity equations with a velocity field $b\in W^{1,p}_{loc}(\R^n;\R^n)$, $\frac1p+\frac1q=1$, $p\in(1,\infty]$. It turns out that a modification of the argument in \cite{Seis} brings the following uniqueness result for solutions in $L^1(0,T; L^1(\R^n))$ (actually a more general one, see Section \ref{proofmainPROP} for details).

\begin{proposition}\label{mainPROP}
Let $b:[0,T]\times\RN\to\RN$ be a vector field. Assume that 
\[\label{a1}
|b(t,x)|\leq C(1+|x|)   \tag*{(A1)} ,
\]
and that 
\[\label{a2}
|b(t,x)-b(t,y)|\leq C\,|x-y|\,\log(e+1/|x-y|), \tag*{(A2)}  
\]
of all $x,y\in\RN$ and almost every $t\in [0,T]$.  If $\rho_0\in L^1(\RN)$, then the Cauchy problem
$$
\left\{ 
\begin{array}{rll}
\di_t \rho +\diver(b \rho) & =0 & \textrm{in}\; (0,T)\times\RN \\
\rho(0,\cdot) & =\rho_0 & \textrm{in}\; \RN 
\end{array}\right.
$$
has at most one weak solution $\rho\in L^1(0,T;L^1(\RN))$. 
\end{proposition}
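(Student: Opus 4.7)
The plan is to exploit duality: assumption \ref{a2} already produces, via Osgood's theorem, a unique classical flow for $b$, and the task is to transfer this Lagrangian uniqueness to arbitrary Eulerian $L^1$ solutions by testing against functions transported along flows of regularizations of $b$. By linearity, I may assume $\rho_0 \equiv 0$ and aim to show $\rho \equiv 0$; after passing to the standard continuous-in-time weak representative of $\rho$, it suffices to fix $\psi \in C_c^\infty(\RN)$ and $\tau \in (0,T]$ and prove $\int_{\RN} \rho(\tau,\cdot)\,\psi\,dx = 0$.

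To build the test function, I regularize $b_\varepsilon(t,\cdot) := b(t,\cdot) * \eta_\varepsilon$ with a standard mollifier. The log-Lipschitz bound \ref{a2} yields, on compact sets, $\|b - b_\varepsilon\|_{L^\infty} \lesssim \varepsilon\log(1/\varepsilon)$ and $\|\nabla_x b_\varepsilon\|_{L^\infty} \lesssim \log(1/\varepsilon)$; by \ref{a1} and Gronwall, the dynamics relevant to a $\psi$ supported in a given ball stay compactly contained on $[0,T]$. Letting $\Phi^\varepsilon_{s,t}$ denote the flow of $b_\varepsilon$, the variational equation combined with the above gradient bound gives $|\nabla_x \Phi^\varepsilon_{t,\tau}(x)| \leq \exp\bigl(C(\tau-t)\log(1/\varepsilon)\bigr) = \varepsilon^{-C(\tau-t)}$. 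I then define $\varphi^\varepsilon(t,x) := \psi\bigl(\Phi^\varepsilon_{t,\tau}(x)\bigr)$ for $t \in [0,\tau]$, extended by zero for $t > \tau$; by construction $\di_t \varphi^\varepsilon + b_\varepsilon \cdot \nabla \varphi^\varepsilon = 0$ on $(0,\tau) \times \RN$.

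Testing the weak formulation against $\varphi^\varepsilon$ (after smoothing the sharp cutoff at $\tau$ by a $\chi_\tau$ and sending $\chi_\tau \to \mathbf{1}_{[0,\tau]}$, which is allowed by $\rho \in L^1([0,T]\times\RN)$ combined with the continuous-in-time representative), and using $\rho_0 = 0$, yields
\[
\int_{\RN} \rho(\tau,x)\,\psi(x)\,dx \;=\; \int_0^\tau\!\!\int_{\RN} \rho\,(b - b_\varepsilon)\cdot \nabla \varphi^\varepsilon \,dx\,dt.
\]
The right-hand side is controlled by $\|\nabla\psi\|_\infty\,\|\rho\|_{L^1([0,\tau]\times\RN)}\,\sup_{t\leq\tau}\|b - b_\varepsilon\|_\infty\,\sup_{t\leq\tau}\|\nabla \Phi^\varepsilon_{t,\tau}\|_\infty \;\lesssim\; \varepsilon^{1-C\tau}\log(1/\varepsilon)$.

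The main obstacle is exactly the tension visible in this estimate: the $O(\varepsilon\log(1/\varepsilon))$ smallness from mollification must beat the $O(\varepsilon^{-C\tau})$ blow-up of flow gradients, which succeeds only when $C\tau < 1$ and therefore restricts the argument to short time horizons. Since the constants in \ref{a1}--\ref{a2} are uniform in $t$, I expect to close the proof by iteration: once $\rho(\tau_0,\cdot) \equiv 0$ has been obtained for some small $\tau_0 < 1/C$, the same argument restarts with $\tau_0$ as the new initial time and propagates vanishing to $[\tau_0, 2\tau_0]$, and so on, covering $[0,T]$ in finitely many steps. The remaining technicalities---backward Gronwall under \ref{a1} to keep $\supp_x \varphi^\varepsilon$ bounded, selecting the continuous-in-time representative of $\rho$ so that the pairing at $t = \tau$ is well defined, and justifying the cutoff limit $\chi_\tau \to \mathbf{1}_{[0,\tau]}$ in the weak formulation---are all routine.
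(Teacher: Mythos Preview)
Your argument is correct, but it follows a genuinely different route from the paper's own proof. The paper deduces Proposition~\ref{mainPROP} as the special case $G(t)=1+t$, $\omega(s)=s\log(e+1/s)$ of Proposition~\ref{moregeneralPROP}, whose proof is built entirely on optimal transport: one forms the Kantorovich cost $D_{\delta,k}(t)$ between the positive and negative parts of $\chi_k u(t,\cdot)$ with respect to the cost $c_\delta(r)=\int_0^r\frac{ds}{\omega(s)+\delta}$, computes its weak time derivative via the Kantorovich potential (Lemma~\ref{wdiff}), bounds it using Corollary~\ref{firsttermest}, and then compares to a reference Wasserstein-type distance through Lemma~\ref{comparison}, sending $k\to\infty$ and $\delta\to 0$.

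Your approach is instead the classical adjoint (backward-transport) method: mollify $b$, solve the smooth backward transport equation to manufacture a test function, and control the commutator $(b-b_\varepsilon)\cdot\nabla\varphi^\varepsilon$ by trading the $O(\varepsilon\log(1/\varepsilon))$ consistency error against the $\varepsilon^{-C\tau}$ blow-up of the flow gradient, then iterate in time. This is more elementary and avoids the optimal-transport machinery entirely; in spirit it is the Ambrosio--Bernard argument the paper cites in the introduction. What the paper's approach buys is exactly what it is written for: it upgrades directly to measure-valued solutions and to the merely \emph{local} modulus-of-continuity hypothesis \ref{b2} of Theorem~\ref{mainTHM}, where your commutator bound $\|b-b_\varepsilon\|_\infty\lesssim\varepsilon\log(1/\varepsilon)$ would no longer be uniform and the adjoint argument, as written, does not obviously survive.
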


\noindent
The above result still does not provide a complete proof for uniqueness in Theorem \ref{mainH1}. Indeed, there are vector fields $b$ admissible for Theorem \ref{mainH1} for which \ref{a1} and \ref{a2} may both fail at the same time. It is worth mentioning too that existence of solutions may fail in the setting of Proposition \ref{mainPROP}, since flows arising from the above velocity fields $b$ do not preserve Lebesgue measurable sets in general. See \cite{CC, CJMO, CJMO2} for optimal conditions in this direction. In particular, these flows could move an initial datum $\rho_0$ away from $L^1(\RN)$ and transform it into a non absolutely continuous measure. It is then natural to look at the case where $\rho(t,\cdot)$ belongs to the space of signed Borel measures on $\RN$ with finite total variation, which is denoted by $\M(\RN)$. It turns out that this is the right choice for a full well-posedness theorem.

\begin{theorem}\label{mainTHM}
Let $b:[0,T]\times\RN\to\RN$ be a vector field such that the following two conditions hold: There exists a continuous and nondecreasing function $G:[0,\infty)\to(0,\infty)$ satisfying $\int_r^\infty \frac{ds}{G(s)}=\infty$ for some $r>0$ such that
\[\label{b1}
\sup_{x\in\RN} \frac{|b(t,x)|}{G(|x|)} \in L^\infty(0,T) \tag*{(B1)},
\]
and there exists a continuous and nondecreasing function $\omega:[0,\infty)\to[0,\infty)$ satisfying $\int_0^r \frac{ds}{\omega(s)} = \infty$ for some $r>0$ such that
\[\label{b2}
\sup_{x,y \in B(0,R)} \frac{|b(t,x)-b(t,y)|}{\omega(|x-y|)} \in L^\infty(0,T). \tag*{(B2)}
\]
for any radius $R>0$. Then for any $\rho_0\in\M(\RN)$ there exists a unique solution (as defined in Definition \ref{CPsol}) $\rho\in L^1(0,T;\M(\RN))$ for the Cauchy problem
\begin{equation}\label{CPb}
\left\{ 
\begin{array}{rll}
\di_t \rho +\diver(b \rho) & =0 & \textrm{in}\; (0,T)\times\RN \\
\rho(0,\cdot) & =\rho_0 & \textrm{in}\; \RN.
\end{array}\right.
\end{equation}
Moreover, this solution is of the form $\rho(t,\cdot)=(\phi_t)_\sharp\rho_0$, where $\phi_t$ is the flow of the vector field $b$.
\end{theorem}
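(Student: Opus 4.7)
I would separate existence and uniqueness, using the flow of $b$ as the common structural ingredient. Under \ref{b2}, the Osgood condition $\int_0^r ds/\omega(s)=\infty$ gives uniqueness of ODE solutions through the classical Gronwall--Osgood comparison; \ref{b1} together with $\int_r^\infty ds/G(s)=\infty$ rules out finite-time blow-up by comparison with the scalar equation $\dot r = G(r)$; and Peano's theorem (applicable since $b$ is continuous in $x$ and integrable in $t$) provides local existence. Combining these yields a globally defined flow $\phi_t:\R^n\to\R^n$ for $t\in[0,T]$, and a symmetric argument applied to the time-reversed equation shows that each $\phi_t$ is a homeomorphism with continuous inverse and quantitative modulus of continuity inherited from $\omega$.

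For existence I set $\rho(t,\cdot):=(\phi_t)_\sharp\rho_0$; since $\phi_t$ is a bijection one has $\|\rho(t,\cdot)\|_{\M}=\|\rho_0\|_{\M}$, so $\rho\in L^\infty(0,T;\M(\R^n))\subset L^1(0,T;\M(\R^n))$. To check the weak formulation, take $\varphi\in C^\infty_c([0,T)\times\R^n)$. The chain rule along trajectories gives
\[
\frac{d}{dt}\varphi(t,\phi_t(x)) = \partial_t\varphi(t,\phi_t(x))+b(t,\phi_t(x))\cdot\nabla\varphi(t,\phi_t(x)),
\]
so, integrating in $t$, testing against $d\rho_0(x)$ and applying Fubini,
\[
\int_0^T\!\!\int(\partial_t\varphi+b\cdot\nabla\varphi)\,d\rho(t,\cdot)\,dt = -\int\varphi(0,x)\,d\rho_0(x),
\]
which is the distributional formulation of \eqref{CPb} required by Definition \ref{CPsol}.

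For uniqueness, by linearity it suffices to show that any solution $\rho$ with $\rho_0=0$ vanishes. The main tool for non-negative solutions is Ambrosio's superposition principle: any $\mu\in L^\infty(0,T;\M(\R^n))$ with $\mu(t,\cdot)\geq 0$ solving the continuity equation can be written as $\mu(t,\cdot)=(e_t)_\sharp\eta$ for some non-negative measure $\eta$ on $\R^n\times C([0,T];\R^n)$ supported on pairs (initial point, integral curve of $b$). The Osgood uniqueness from \ref{b2} collapses all integral curves onto $t\mapsto\phi_t(x)$, so $\mu(t,\cdot)=(\phi_t)_\sharp \mu(0,\cdot)$; applied to the positive and negative parts of the Jordan decomposition of the initial datum, this identifies the canonical signed solution $\bar\rho(t,\cdot):=(\phi_t)_\sharp\rho_0$.

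The main obstacle is the extension to \emph{signed} solutions: the Jordan decomposition does not commute with the continuity equation, and the DiPerna--Lions commutator technique requires weak differentiability of $b$, which is not available under mere Osgood regularity. I would address this either by lifting $\rho$ to a signed $\eta$ on path space and using the Hahn decomposition together with the uniqueness of characteristics to conclude that each piece is a pushforward along $\phi_t$, or by approximating $b$ with smooth $b^\varepsilon$ preserving \ref{b1}--\ref{b2} with uniform constants, using $\psi_t^\varepsilon(x):=\psi(\Phi^\varepsilon(t^*,t,x))$ as a $C^1$ test function (with $\Phi^\varepsilon$ the smooth backward flow of $b^\varepsilon$), and controlling the resulting commutator error $\int_0^{t^*}\!\!\int(b-b^\varepsilon)\cdot\nabla\psi_t^\varepsilon\,d\rho(t)\,dt$ as $\varepsilon\to 0$ by exploiting the Osgood modulus of $\Phi^\varepsilon$ in place of any pointwise bound on $\nabla\psi_t^\varepsilon$, which one does not have under only Osgood regularity on the vector field.
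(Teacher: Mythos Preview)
Your existence argument is essentially the paper's: build the Osgood flow $\phi_t$, push $\rho_0$ forward, and verify the weak formulation by the chain rule along trajectories. That part is fine.

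The uniqueness argument, however, has a genuine gap at exactly the point you flag. You correctly observe that the superposition principle only handles non-negative solutions, and that the Jordan decomposition of a signed solution need not produce solutions. But neither of your two proposed remedies is carried through. Option~(a), lifting $\rho$ to a \emph{signed} measure on path space and then Hahn-decomposing, is not a standard result; the superposition principle is fundamentally a disintegration-and-tightness argument for non-negative measures, and there is no ready-made signed analogue. Option~(b) is the Ambrosio--Bernard strategy, which the paper explicitly discusses and sets aside: it is known to work for globally bounded $b$ with a \emph{global} Osgood modulus, whereas here \ref{b1} allows unbounded growth and \ref{b2} is only local. Your final sentence admits that $\nabla\psi_t^\varepsilon$ is not uniformly bounded and proposes to ``exploit the Osgood modulus of $\Phi^\varepsilon$'' instead, but you do not say how the integral $\int(b-b^\varepsilon)\cdot\nabla\psi_t^\varepsilon\,d\rho_t$ is actually controlled when the gradient blows up and $b-b^\varepsilon$ only tends to zero locally uniformly; this is precisely the hard step, and waving at the modulus of continuity does not resolve it.

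The paper takes a completely different route for uniqueness, via optimal transport in the spirit of Seis. One mollifies $\rho_t$ to $\eta_\alpha\ast\rho_t$, truncates with the cutoffs $\chi_k$ adapted to $G$, and studies the transport cost $D_{\alpha,\beta,\delta,k}(t)$ between the positive and negative parts for the cost $c_{\beta,\delta}(r)=\beta\int_0^r\frac{ds}{\omega(s)+\delta}$. Kantorovich duality gives a weak derivative for this cost in which the Osgood condition appears directly through $c'_{\beta,\delta}(|x-y|)\,|b(t,x)-b(t,y)|\le C_{\omega,k}\beta$ (Corollary~\ref{firsttermest}), with extra terms coming from the cutoff and the mollification. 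A careful choice of $\alpha,\beta,\delta$ as functions of $k$ makes the cost uniformly bounded while $c_k(r)\to\infty$ for every $r>0$; the comparison Lemma~\ref{comparison} then forces $W(\rho_t^+,\rho_t^-)=0$, hence $\rho_t=0$. This argument never invokes superposition, never differentiates the flow, and handles the unbounded, locally-Osgood setting directly.
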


\noindent
It is worth mentioning that Theorem \ref{mainTHM} holds under slightly more generality. Namely, if one further knows that $\rho\in L^\infty(0,T;\M(\RN))$, then uniqueness still holds if one replaces $L^\infty(0,T)$ by $L^1(0,T)$ in both \ref{b1} and \ref{b2}. The argument for obtaining existence in Theorem \ref{mainTHM} is straightforward, because of to the measure-valued setting and the continuity of $b$. Concerning uniqueness, the proof is much trickier. Exactly as in Proposition \ref{mainPROP}, the proof expands even further the methods used by Seis in \cite{Seis} using optimal transport. Among the consequences, uniqueness in Theorem \ref{mainH1} now follows from Theorem \ref{mainTHM}. Moreover,  it also provides uniqueness of solutions in $L^1(0,T; L^1(\RN))$ for the continuity equation in these cases where one can guarantee the preservation of Lebesgue measurable sets under the flow. This is the case for instance if
$$
\diver(b)\in \operatorname{Exp}\left(\frac{L}{\log L \log\log L \cdots}\right),
$$
see also \cite{CJMO2}. Nevertheless, conditions \ref{b1}, \ref{b2} are neither a consequence of, nor a reason for any condition on the divergence of $b$. Some counterexamples in this direction are given at the end of Section \ref{proofmainTHM}.  \\
\\
The paper is structured as follows. In Section \ref{prelim} we give some definitions and basic results for continuity equations. In Section \ref{masstransprelim} we overview some results from optimal transport theory. In Section \ref{proofmainPROP} we prove a slightly more general version of Proposition \ref{mainPROP}, which allows us to present the ideas behind the proof of Theorem \ref{mainTHM} without getting hung up on all the details. In Section \ref{proofmainTHM} we prove Theorem \ref{mainTHM} and give some counterexamples for the optimality of its assumptions. In Section \ref{applications} we prove Theorem \ref{mainH1}. All along the paper we will be abusing notation, so that we identify $\rho(t,\cdot)=\rho_t$ and $\phi(t,\cdot)=\phi_t$.\\
\\
{\textbf{Acknowledgements}}. A.C., J.M. and J. O. were partially supported by research grants 2014SGR75 (Generalitat de Catalunya) and MTM2016-75390-P (spanish government). All named authors were partially supported by the research grant FP7-607647 (European Union).

\section{Basic properties of continuity equations}\label{prelim}

\noindent
Let us first introduce our notation. By $\M_+(\RN)$ we denote the set of finite nonnegative Borel measures on $\RN$. 
We denote by $\M(\RN)$ the space of signed Borel measures on $\RN$ with finite total variation, that is, $\mu\in\M(\RN)$ if $\mu$ is a signed Borel measure and
\[
\|\mu\|_{TV}:=\sup \Big\{ \sum_{j\in\N}|\mu(A_j)| : A_j\in\B(\RN), \, A_i\cap A_j = \emptyset \;\textrm{for}\, i\ne j \Big\} <\infty,
\]
where $\B(\RN)$ is the Borel $\sigma$-algebra. For $\mu\in\M(\RN)$ we denote by $|\mu|$ the total variation measure of $\mu$. We use the Jordan decomposition of measures: Any $\mu\in\M(\RN)$ can be written as $\mu=\mu^+-\mu^-$ for some mutually singular measures $\mu^+,\mu^-\in\M_+(\RN)$ 
Given $\mu\in\M(\RN)$ and a homeomorphism $\phi:\RN\to\RN$ we define the \emph{push-forward} $\phi_\sharp\mu$ by 
\[
\phi_\sharp\mu(A) := \mu(\phi^{-1}(A)) \quad\textrm{for any}\; A\in\B(\RN).
\]
Equivalently, one can define $\phi_\sharp\mu$ by duality with the space $C_0(\R^n)$ of continuous functions vanishing at infinity,
$$\langle \phi_\sharp\mu, \varphi\rangle:=\int \varphi(\phi(x))\,d\mu(x),\hspace{.5cm}\forall\varphi\in C_0(\R^n).$$  
Then it is easy to see that $\phi_\sharp\mu\in\M(\RN)$ with $\|\phi_\sharp\mu\|_{TV} \le \|\mu\|_{TV}$.

\begin{definition}\label{CPsol}
A measure valued map  $\rho\in L^1(0,T; \M(\RN))$ is a \emph{weak solution} of the Cauchy problem \eqref{CPb} for the continuity equation 
if the equality
\[
\int_0^T \int_{\RN} \big[ \di_t\varphi(t,x)+\langle b(t,x), \nabla \varphi(t,x) \rangle \big] d\rho_t(x)\, dt + \int_{\RN} \varphi(0,x)\,d\rho_0(x) =0
\]
holds for all functions $\varphi\in C_c^\infty([0,T)\times \RN)$.
%
\end{definition}

\noindent
Let's point out some simple properties of solutions, which we will prove for completeness. First is related to the weak continuity of solutions. 

\begin{lemma}\label{wcont}
Let $b\in L^\infty(0,T;L_{loc}^\infty(\RN))$. Let $\rho\in L^1(0,T;\M(\RN))$ be a solution for the Cauchy problem \eqref{CPb}.
Then we have the convergence 
\[
\int_{\RN} \varphi(x)\,d\rho_t(x) \stackrel{t\to 0}{\longrightarrow} \int_{\RN} \varphi(x)\,d\rho_0(x)
\]
for every $\varphi\in C_c^\infty(\RN)$.
\end{lemma}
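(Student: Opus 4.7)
The plan is to reduce weak continuity at $t=0$ to the classical fact that a scalar function on $(0,T)$ whose distributional derivative lies in $L^1$ has an absolutely continuous representative with a well-defined boundary value at~$0$. Fix $\varphi\in C_c^\infty(\RN)$ and introduce
\[
f(t):=\int_\RN \varphi(x)\,d\rho_t(x),\qquad g(t):=\int_\RN \langle b(t,x),\nabla\varphi(x)\rangle\,d\rho_t(x).
\]
First I would observe that both $f$ and $g$ belong to $L^1(0,T)$. The bound $|f(t)|\le\|\varphi\|_\infty\|\rho_t\|_{TV}$ is immediate from $\rho\in L^1(0,T;\M(\RN))$, and the analogous integrability of $g$ follows from the compactness of $\supp\varphi$ together with the assumption $b\in L^\infty(0,T;L^\infty_{loc}(\RN))$, which yields $|g(t)|\le\|b\|_{L^\infty(0,T;L^\infty(\supp\varphi))}\,\|\nabla\varphi\|_\infty\,\|\rho_t\|_{TV}$.

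Next I would plug test functions of the product form $\psi(t,x)=\eta(t)\varphi(x)$, first with $\eta\in C_c^\infty((0,T))$, into the weak formulation of Definition~\ref{CPsol}. The $\rho_0$-term vanishes because $\eta(0)=0$, leaving $\int_0^T[\eta'(t)f(t)+\eta(t)g(t)]\,dt=0$, which is precisely the statement that $f'=g$ in $\mathcal{D}'((0,T))$. Since $f,g\in L^1(0,T)$, this puts $f$ in $W^{1,1}(0,T)$, so after modification on a Lebesgue-null set of times $f$ agrees with a function $\tilde f\in AC([0,T])$; in particular $\tilde f(0)$ is meaningful.

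To identify $\tilde f(0)$ with $\int_\RN \varphi\,d\rho_0$ I would repeat the preceding step with $\eta\in C_c^\infty([0,T))$ chosen so that $\eta(0)=1$. Combining the fundamental theorem of calculus applied to $\tilde f$ with a single integration by parts turns $\int_0^T \eta'(t)\tilde f(t)\,dt$ into $-\tilde f(0)-\int_0^T \eta(t) g(t)\,dt$, at which point the weak formulation collapses to $\tilde f(0)=\int_\RN \varphi\,d\rho_0$, delivering the claimed limit. No step here is a genuine obstacle; the only real subtlety is that $\rho$ is defined only up to a $t$-null set, so the stated pointwise convergence must be read after replacing $\rho$ by the consistent weakly$^*$-continuous representative obtained by running the argument on a countable dense subfamily of $C_c^\infty(\RN)$ and discarding one common exceptional null set of times.
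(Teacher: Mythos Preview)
Your proof is correct and follows essentially the same route as the paper: both insert product test functions $\eta(t)\varphi(x)$ into the weak formulation to obtain the identity $\int_{\RN}\varphi\,d\rho_{t_0}=\int_{\RN}\varphi\,d\rho_0+\int_0^{t_0}\!\int_{\RN}\langle b,\nabla\varphi\rangle\,d\rho_t\,dt$, from which the convergence is immediate. The only cosmetic difference is that the paper reaches this identity directly by choosing a specific family $\psi_{t_0,\varepsilon}$ of time cutoffs and letting $\varepsilon\to0$, whereas you first establish $f'=g$ in $\mathcal{D}'((0,T))$ and then invoke the $W^{1,1}(0,T)$ framework; your closing remark on the weakly$^*$-continuous representative is in fact more explicit than the paper's parenthetical ``for a.e.~$t_0$''.
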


\begin{proof}
Let $\varphi\in C_c^\infty(\RN)$. Fix $t_0\in(0,T)$ and $\varepsilon\in(0,t_0)$. Choose a nonnegative function $\eta_{t_0,\varepsilon}\in C_c^\infty(t_0-\varepsilon,t_0+\varepsilon)$ such that $\int \eta_{t_0,\varepsilon}(s)\,ds=1$, and define $\psi_{t_0,\varepsilon}(t):= 1-\int_0^t\eta_{t_0,\varepsilon}(s)\,ds$. 
Then using $\psi_{t_0,\varepsilon}\varphi$ as a test function we get
\begin{align*}
& \int_0^T \psi_{t_0,\varepsilon}'(t)\int_{\RN} \varphi\,d\rho_t\,dt +\int_0^T \psi_{t_0,\varepsilon}(t)\int_{\RN} \Langle b(t,x) , \nabla\varphi(x) \Rangle d\rho_t(x)\,dt \\
& \qquad \qquad + \int_{\RN} \varphi\,d\rho_0 \quad = \quad 0.
\end{align*}
Taking $\varepsilon\to 0$ we obtain (for a.e.~$t_0$, if we are precise)
\begin{equation}\label{t0eq}
-\int_{\RN} \varphi \,d\rho_{t_0} +\int_0^{t_0} \int_{\RN} \Langle b(t,x) , \nabla\varphi(x) \Rangle d\rho_t(x)\,dt + \int_{\RN} \varphi \,d\rho_0 = 0.
\end{equation}
Thus we can deduce that
\[
\Big| \int_{\RN} \varphi \,d\rho_{t_0}-\int_{\RN} \varphi \,d\rho_0 \Big| \le \Big| \int_0^{t_0} \int_{\RN} \Langle b(t,x) , \nabla\varphi(x) \Rangle d\rho_t(x)\,dt \Big| \to 0,
\]
as $t_0\to 0$, which is what we wanted to prove.
\end{proof}

\noindent
Before the next Lemma we need to introduce speciffic cut-off functions that will be used throughout the paper. Let $k\in\N$ and consider the function
\[
\tilde{\chi}_k(x) = \left\{ \begin{array}{ll} 
1, & \textrm{if}\; |x|\le k \\
\max\{0, 1-\int_k^{|x|} \frac{dr}{G(r)} \}, & \textrm{if}\; |x|> k.
\end{array} \right.
\]
Modifying this function we find a smooth, compactly supported function $\chi_k\in C_c^\infty(\RN)$ such that 
\begin{equation}\label{cutoff}
\begin{split}
& \chi_k=1 \;\;\; \textrm{in}\;\; B(0,k), \\ 
& \chi_k=0 \;\;\;\textrm{in}\;\; \RN\setminus B(0,R_k) \;\;\;\textrm{for some} \;\; k<R_k<\infty \\ 
\textrm{and} \quad & |\nabla \chi_k(x)|\le \frac{2}{G(|x|)}.
\end{split}
\end{equation}
Here the existence of $R_k$ follows from the assumption $\int_r^\infty \frac{ds}{G(s)} =\infty$ for some (and thus any) $r>0$. \\
\\
The second important property of solutions is the conservation of mass, or from the point of view of this paper, the conservation of mass balance.

\begin{lemma}\label{0int}
Suppose the vector field $b$ satisfies the condition \ref{b1} and that $\rho\in L^1(0,T;\M(\RN))$ is a weak  solution for the Cauchy problem
\begin{equation}\label{0eq}
\left\{ 
\begin{array}{rl}
\di_t \rho +\diver(b \rho) & = 0 \\
\rho(0,\cdot) & = 0 
\end{array}\right.
\end{equation}
Then for a.e.~$t\in[0,T]$ it holds that $\rho_t(\RN)=\int_{\RN}d\rho_t=0$, or in other words: $\rho_t^+(\RN)=\rho_t^-(\RN)$.
\end{lemma}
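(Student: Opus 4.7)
The plan is to test the weak formulation with a cutoff that exploits the sublinear-in-$G$ growth assumption \ref{b1}. Concretely, I would use the identity \eqref{t0eq} derived inside the proof of Lemma \ref{wcont} together with the cutoff functions $\chi_k$ introduced in \eqref{cutoff}. First I note that \ref{b1} gives $b\in L^\infty(0,T;L^\infty_{loc}(\RN))$ (since $G$ is continuous, hence bounded on compact sets), so Lemma \ref{wcont} and in particular \eqref{t0eq} are available.

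Applying \eqref{t0eq} with $\rho_0=0$ and $\varphi=\chi_k\in C_c^\infty(\RN)$ yields
\[
\int_{\RN}\chi_k\,d\rho_{t_0} \;=\; \int_0^{t_0}\int_{\RN} \Langle b(t,x),\nabla\chi_k(x)\Rangle\,d\rho_t(x)\,dt
\]
for a.e.~$t_0\in(0,T)$. The strategy is now to let $k\to\infty$. For the left-hand side, $\chi_k(x)\to 1$ pointwise as $k\to\infty$ (since $\chi_k\equiv 1$ on $B(0,k)$) and $|\chi_k|\le 1$, so dominated convergence with respect to the finite measure $|\rho_{t_0}|$ gives $\int\chi_k\,d\rho_{t_0}\to \rho_{t_0}(\RN)$.

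For the right-hand side, the key point is the pointwise bound
\[
\big|\Langle b(t,x),\nabla\chi_k(x)\Rangle\big| \;\le\; |b(t,x)|\cdot\frac{2}{G(|x|)} \;\le\; 2\,M(t), \qquad M(t):=\Big\|\frac{b(t,\cdot)}{G(|\cdot|)}\Big\|_{L^\infty(\RN)},
\]
combined with the fact that $\nabla\chi_k$ is supported in $\{|x|\ge k\}$. Since $M\in L^\infty(0,T)$ by \ref{b1}, the integrand is dominated by the $L^1(0,T)$ function $2\,M(t)\,\|\rho_t\|_{TV}$, independent of $k$. Moreover, for each $t$ the finite measure $|\rho_t|$ satisfies $|\rho_t|(\{|x|\ge k\})\to 0$ as $k\to\infty$, so a further application of dominated convergence sends the right-hand side to $0$. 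Thus $\rho_{t_0}(\RN)=0$ for a.e.~$t_0\in[0,T]$, which is equivalent to $\rho_{t_0}^+(\RN)=\rho_{t_0}^-(\RN)$.

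The only subtlety I foresee is making sure the tail control $|\rho_t|(\{|x|\ge k\})\to 0$ is used properly together with the uniform-in-$k$ bound coming from pairing $|b|\le C\,G(|x|)$ with $|\nabla\chi_k|\le 2/G(|x|)$; this is exactly the balance that the cutoff \eqref{cutoff} was designed to produce, so the obstacle is purely bookkeeping rather than genuinely new analysis.
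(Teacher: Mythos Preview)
Your proposal is correct and follows essentially the same route as the paper: both test against the cutoffs $\chi_k$ from \eqref{cutoff}, derive the identity $\int\chi_k\,d\rho_{t_0}=\int_0^{t_0}\int\langle b,\nabla\chi_k\rangle\,d\rho_t\,dt$, and pass to the limit using the balance $|b|\lesssim G(|x|)$ against $|\nabla\chi_k|\le 2/G(|x|)$ on $\{|x|\ge k\}$. The only cosmetic difference is that you invoke \eqref{t0eq} directly, whereas the paper first rederives the weak time-derivative of $\int\chi_k\,d\rho_t$ via test functions $\psi(t)\chi_k(x)$ and then integrates using Lemma~\ref{wcont}; the substance is identical.
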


\begin{proof}
Use a test function $\varphi(t,x)=\psi(t)\chi_k(x)$, where $\psi\in C_c^\infty(0,T)$. This gives
\[
\int_0^T \psi'(t)\int_{\RN} \chi_k(x)\,d\rho_t(x)\,dt = - \int_0^T \psi(t) \int_{\RN} \big\langle b(t,x), \nabla\chi_k(x) \big\rangle d\rho_t(x).
\]
Since this works for every $\psi\in C_c^\infty(0,T)$, we obtain
\[
\di_t\Big( \int_{\RN}\chi_k(x)\,d\rho_t(x)\Big)=\int_{\RN} \Langle b(t,x), \nabla\chi_k(x) \Rangle d\rho_t(x).
\]
Combining this with Lemma \ref{wcont} we get
\begin{align*}
& |\rho_t(\RN)| =\lim_{k\to\infty} \Big| \int_{\RN}\chi_k(x)\,d\rho_t(x) \Big| \\
\le & \limsup_{k\to\infty}  \int_0^t \Big| \int_{\RN} \Langle b(s,x), \nabla\chi_k(x) \Rangle d\rho_s(x) \Big|ds \\
\le & 2 \Big\| \frac{|b(t,x)|}{G(|x|)} \Big\|_{L^\infty((0,T)\times \RN)} \limsup_{k\to\infty}  \int_0^t \int_{\RN\setminus B(0,k)} d|\rho_s|(x) ds =0,
\end{align*}
and thus conclude the proof.
\end{proof}

\noindent
In the smooth case, the solution to the continuity equation can be found using the method of characteristics. Actually, this works also in our non-smooth case.

\begin{proof}[Proof of the existence part of Theorem \ref{mainTHM}]
Let $\phi$ be the flow of $b$. This flow exists by the classical Cauchy-Lipschitz theory, and moreover it satisfies
\begin{equation}\label{DE}
\phi(t,x)=x+\int_0^t b\big(s,\phi(s,x)\big)\,ds \quad\textrm{for any}\; x\in\RN.
\end{equation}  
We need to show that $\rho_t:=\phi(t,\cdot)_\sharp \rho_0$ is a solution to the Cauchy problem \eqref{CPb}. First, the properties of the push-forward ensure that we actually have $\rho_t\in L^\infty(0,T;\M(\RN))$, since $\|\rho_t\|_{TV}$ is bounded by $\|\rho_0\|_{TV}$. 

Let $\varphi\in C_c^\infty([0,T)\times\RN)$. First notice that $\varphi$ is Lipschitz and $t \mapsto\phi(t,x)$ is absolutely continuous for any $x\in\RN$. This implies that the function $f_x(t):=\varphi(t,\phi(t,x))$ is absolutely continuous for any $x\in\RN$ and 
\begin{equation}\label{ftc}
\int_0^T f'_x(t)\,dt =f_x(T)-f_x(0)=-\varphi(0,x).
\end{equation}
On the other hand if we fix $x\in\RN$, then for $\Leb^1$-a.e.~$t\in[0,T)$ we have
\begin{equation}\label{derivative}
\begin{split}
f'_x(t) & =\di_t\varphi(t,\phi(t,x))+\big\langle \nabla\varphi(t,\phi(t,x)), \di_t\phi(t,x) \big\rangle \\
& = \di_t\varphi(t,\phi(t,x))+\big\langle b(t,\phi(t,x)), \nabla\varphi(t,\phi(t,x)) \big\rangle.
\end{split}
\end{equation}
Now we can check that $\rho_t$ is a solution. Using the definition of the push-forward measure and applying Fubini's Theorem, \eqref{derivative} and \eqref{ftc} we calculate
\begin{align*}
& \int_0^T\int_{\RN} \big[ \di_t\varphi(t,x)+ \big\langle b(t,x), \nabla\varphi(t,x) \big\rangle \big] d\rho_t(x)\,dt +\int_{\RN}\varphi(0,x)\,d\rho_0(x) \\
= &  \int_0^T\int_{\RN} \big[ \di_t\varphi(t,\phi(t,x))+\big\langle b(t,\phi(t,x)), \nabla\varphi(t,\phi(t,x)) \big\rangle \big] d\rho_0(x)\,dt +\int_{\RN}\varphi(0,x)\,d\rho_0(x) \\
= & \int_{\RN}\int_0^T f'_x(t) \,dt\,d\rho_0(x) +\int_{\RN}\varphi(0,x)\,d\rho_0(x) =0.
\end{align*}
Since $\varphi\in C_c^\infty([0,T)\times \RN)$ was arbitrary, $\phi(t,\cdot)_\sharp\rho_0$ is a solution to the Cauchy problem \eqref{CPb}.
\end{proof}

\section{Useful results from the theory of optimal transport}\label{masstransprelim}

\noindent
In this section we present the part of the theory of optimal transport which we will use in the uniqueness proof. For further interest to this topic we refer to the books \cite{Villani1}, \cite{Villani2} by Villani, as well as \cite{Santambrogio} by Santambrogio and \cite{AGS} by Ambrosio, Gigli and Savar\'e.\\
\\
We want to transport mass in $\RN$ but our use of cut-off functions might lead to a difference between the initial and final mass. This could be a problem. The typical way (see e.g.~\cite{Ekeland}, \cite{CaffMcC} and \cite{HJ}) to avoid this mass inbalance is to add an isolated point $\Diamond$ to $\RN$. We write $\RH:=\RN\cup \{\Diamond\}$. We also ''extend'' the euclidean distance to $\RH$ by setting $|x-\Diamond|=|\Diamond-x|=\infty$ whenever $x\in\RN$ and obviously $|\Diamond-\Diamond|=0$.\\
\\
Given two Borel measures $\mu,\nu\in\M_+(\RH)$ with $\mu(\RH)=\nu(\RH)$ we denote the set of \emph{transport plans} between $\mu$ and $\nu$ by $\Pi(\mu,\nu)$. In other words, a measure $\lambda\in\M_+(\RH\times\RH)$ belongs to $\Pi(\mu,\nu)$, if 
\[
\lambda(A\times\RH)=\mu(A) \quad\textrm{and}\quad \lambda(\RH\times A)=\nu(A)
\]
for all Borel sets $A\subset\RH$. Equivalently, $\lambda\in\Pi(\mu,\nu)$ if
\[
\int_{\RH\times\RH} \big(u(x)+v(y) \big)d\lambda(x,y) =\int_{\RH}u(x)\,d\mu(x) + \int_{\RH} v(y)\,d\nu(y)
\]
for all $u\in L^1(\mu)$, $v\in L^1(\nu)$. If in addition to the measures $\mu,\nu\in\M_+(\RH)$ we are given a continuous cost function $c:\RH\times\RH\to[0,\infty]$, we can study the \emph{optimal transport problem}
\[
\inf_{\lambda\in\Pi(\mu,\nu)} \int_{\RH\times\RH} c(x,y)\,d\lambda(x,y).
\] 
The existence of minimizers, called \emph{optimal transport plans}, can be proved with the direct method in the calculus of variations (see e.g. \cite[Theorem 4.1]{Villani2}). In our case, we are interested in cost functions of the form $c(x,y)=c(|x-y|)$. With our notation, this means that $c(x,\Diamond)=c(\Diamond,x)=c(\infty)$ is a constant among all $x\in\RN$. We assume that $c:[0,\infty]\to[0,\infty]$ satisfies the following conditions:
\begin{equation}\label{cost}
\left\{
\begin{array}{l}
c(0)=0, \;\; c(s)>0 \;\;\textrm{for every $s>0$} \;\;\textrm{and $c$ is nondecreasing} \\
\textrm{$c$ is bounded and continuous} \\
\textrm{$c$ is concave} \\ 
\textrm{$c$ is Lipschitz w.r.t.~the euclidean distance in $[0,\infty)$}.
\end{array}\right.
\end{equation}
We set
\[
C_c(\mu,\nu)  :=\min_{\lambda\in\Pi(\mu,\nu)} \int_{\RH\times\RH} c(|x-y|)\,d\lambda(x,y)
\]
We will study the \emph{total transport cost} $C_c(\mu,\nu)$ for a certain family of cost functions and then compare the estimates for these costs to the special case
\[
W(\mu,\nu)  :=\min_{\lambda\in\Pi(\mu,\nu)} \int_{\RH\times\RH} \min\{|x-y|,1\}\,d\lambda(x,y),
\]
which is used as a reference cost in our considerations. The key comparison between transportation costs $C_c(\mu,\nu)$ and $W(\mu,\nu)$ is given by the following Lemma, which is a generalization of  \cite[Lemma 5]{Seis} .

\begin{lemma}\label{comparison}
Let $\mu,\nu\in\M_+(\RH)$ with $\mu(\RH)=\nu(\RH)$ and let $c:[0,\infty]\to[0,\infty]$ be continuous and strictly increasing. Given any $\varepsilon>0$ we have the upper bound
\[
W(\mu,\nu) \le c^{-1}\left(\frac{C_c(\mu,\nu)}{\varepsilon}\right) \,\mu(\RH) +\varepsilon  + \frac{C_c(\mu,\nu)}{c(1)},
\]
if $\frac{C_c(\mu,\nu)}{\varepsilon}\in c\big([0,\infty]\big)$.
\end{lemma}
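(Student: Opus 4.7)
The plan is to use an optimal transport plan for $C_c(\mu,\nu)$ as a (generally non-optimal) plan to estimate $W(\mu,\nu)$ from above, and then split the integration of the cost $\min\{|x-y|,1\}$ into three regions according to the size of $|x-y|$, using a Chebyshev-type inequality with the strict monotonicity of $c$ to control the $\lambda$-mass of the large-distance regions.

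First I would pick an optimal plan $\lambda\in\Pi(\mu,\nu)$ realizing $C_c(\mu,\nu)$; its existence is standard (it is the same plan used to define $C_c$ as a minimum). Since $\lambda$ is admissible for the reference cost as well, $W(\mu,\nu)\le \int \min\{|x-y|,1\}\,d\lambda(x,y)$. Next I would set $\delta:=c^{-1}(C_c(\mu,\nu)/\varepsilon)$, which is well defined by the hypothesis $C_c(\mu,\nu)/\varepsilon\in c([0,\infty])$, and decompose the product space $\RH\times\RH$ into the three $\lambda$-measurable sets
\[
E_1=\{|x-y|\le \delta\},\quad E_2=\{\delta<|x-y|\le 1\},\quad E_3=\{|x-y|>1\}.
\]
Note that any pair involving the isolated point $\Diamond$ has $|x-y|=\infty$ and hence automatically lies in $E_3$, so the auxiliary point needs no special treatment.

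The estimate on $E_1$ is the trivial bound $\min\{|x-y|,1\}\le \delta$, which together with $\lambda(E_1)\le \lambda(\RH\times\RH)=\mu(\RH)$ yields a contribution of at most $\delta\,\mu(\RH)=c^{-1}(C_c(\mu,\nu)/\varepsilon)\,\mu(\RH)$. On $E_2$ and $E_3$ I would apply Chebyshev's inequality based on the strict monotonicity of $c$: integrating $c(|x-y|)$ over $\{|x-y|>t\}$ and using $c(|x-y|)\ge c(t)$ there gives
\[
\lambda(\{|x-y|>t\})\;\le\;\frac{1}{c(t)}\int_{\RH\times\RH} c(|x-y|)\,d\lambda \;=\;\frac{C_c(\mu,\nu)}{c(t)}.
\]
Taking $t=\delta$ produces $\lambda(E_2)\le \lambda(\{|x-y|>\delta\})\le \varepsilon$, and combined with $\min\{|x-y|,1\}\le 1$ on $E_2$ this yields a contribution of at most $\varepsilon$. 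Taking $t=1$ produces $\lambda(E_3)\le C_c(\mu,\nu)/c(1)$, and since $\min\{|x-y|,1\}=1$ on $E_3$, the contribution of $E_3$ is at most $C_c(\mu,\nu)/c(1)$.

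Summing the three pieces gives exactly the claimed bound. No real obstacle is expected: the argument is a straightforward combination of the variational inequality $W(\mu,\nu)\le\int\min\{|x-y|,1\}\,d\lambda$ with a two-parameter Chebyshev split. The only points requiring a small amount of care are the validity of $\delta=c^{-1}(C_c/\varepsilon)$ (granted by hypothesis), and the bookkeeping for the extended point $\Diamond$, which is harmless because its distance to every point of $\RN$ is infinite and therefore contributes only to $E_3$.
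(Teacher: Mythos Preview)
Your proposal is correct and follows essentially the same approach as the paper: choose an optimal plan $\lambda$ for $C_c(\mu,\nu)$, use it as a competitor for $W(\mu,\nu)$, split $\RH\times\RH$ into three regions according to the size of $|x-y|$, and bound the mass of the two ``large'' regions via the Chebyshev-type inequality $\lambda(\{c(|x-y|)>s\})\le C_c(\mu,\nu)/s$. The paper writes the decomposition in terms of the level set $\{c(|x-y|)\le C_c(\mu,\nu)/\varepsilon\}$ intersected with $\{|x-y|\le 1\}$, whereas you first invert to $\delta=c^{-1}(C_c(\mu,\nu)/\varepsilon)$ and split at $|x-y|=\delta$ and $|x-y|=1$; since $c$ is strictly increasing these are the same sets, and the three estimates coincide line by line.
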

\begin{proof}
Define the following sets in $\RH\times\RH$:
\begin{align*}
K_1 & = \left\{|x-y|\le 1: c(|x-y|)\le \frac{C_c(\mu,\nu)}{\varepsilon} \right\}, \\
K_2 & = \left\{|x-y|\le 1: c(|x-y|)> \frac{C_c(\mu,\nu)}{\varepsilon} \right\}, \\
\textrm{and}\quad K_3 & = \big\{|x-y|> 1 \big\}. 
\end{align*}
Let $\lambda$ be an optimal plan for $C_c(\mu,\nu)$. Using $\lambda$ as a test plan for $W(\mu,\nu)$ we see that
\begin{align*}
W(\mu,\nu) & \le \int_{\RH\times\RH} \min\{|x-y|,1\}\,d\lambda(x,y) \\
& = \int_{K_1}\min\{|x-y|,1\}\,d\lambda(x,y) +\int_{K_2}\min\{|x-y|,1\}\,d\lambda(x,y) \\
& \qquad +\int_{K_3}\min\{|x-y|,1\}\,d\lambda(x,y) \\
& \le c^{-1}\left(\frac{C_c(\mu,\nu)}{\varepsilon}\right) \,\lambda(K_1) + \lambda(K_2) +\lambda(K_3).
\end{align*}
Here it is easy to see that
\[
\lambda(K_2)= \int_{K_2} \frac{c(|x-y|)}{c(|x-y|)} \,d\lambda(x,y) \le \frac{\varepsilon}{C_c(\mu,\nu)}\int_{K_2}c(|x-y|) \,d\lambda(x,y) \le \varepsilon
\]
and similarly
\[
\lambda(K_3) = \int_{K_3} \frac{c(|x-y|)}{c(|x-y|)} \,d\lambda(x,y) \le \frac{C_c(\mu,\nu)}{c(1)},
\]
which concludes the proof.
\end{proof}

\noindent
If $c$ satisfies the conditions \eqref{cost}, then $c(|x-y|)$ gives a metric in $\RH$. For this kind of cost functions the optimal transport problem admits a dual formulation due to the celebrated Kantorovich Duality Theorem (see  e.g. \cite[Theorem 5.10]{Villani2}).

\begin{theorem}\label{duality}
Let $\mu,\nu\in\M_+(\RH)$ with $\mu(\RH)=\nu(\RH)>0$. Suppose $c:[0,\infty]\to[0,\infty)$ satisfies conditions \eqref{cost}. Then we have the duality:
\begin{align*}
& \min \left\{ \int_{\RH\times\RH} c(|x-y|)\,d\lambda(x,y) : \lambda\in\Pi(\mu,\nu) \right\} \\
= & \max \left\{ \int_{\RH} v\,d\mu - \int_{\RH} v\,d\nu : \sup_{x\neq y}\frac{|v(x)-v(y)|}{c(|x-y|)} \le 1, \; v(\Diamond)=0 \right\}.
\end{align*}
In addition, the maximizers of the dual problem, called \emph{Kantorovich potentials}, satisfy
\[
v(x)-v(y)=c(|x-y|) \quad\textrm{for every}\; (x,y)\in\supp\lambda,
\]
where $\lambda\in\Pi(\mu,\nu)$ is an optimal plan for $C_c(\mu,\nu)$.
\end{theorem}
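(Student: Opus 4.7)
The plan is to reduce the statement to the standard Kantorovich--Rubinstein duality, exploiting the fact that the hypotheses \eqref{cost} precisely make $d(x,y):=c(|x-y|)$ into a bounded continuous metric on $\RH$. Symmetry, positivity, and boundedness are immediate; continuity follows from continuity of $c$; and the triangle inequality follows from subadditivity of $c$, which one obtains from concavity together with $c(0)=0$: for $a,b\geq 0$ concavity yields $c(a)\geq \tfrac{a}{a+b}c(a+b)$ and $c(b)\geq \tfrac{b}{a+b}c(a+b)$, whose sum is $c(a+b)\leq c(a)+c(b)$. Combined with monotonicity of $c$ and the Euclidean triangle inequality this gives $d(x,z)\leq d(x,y)+d(y,z)$, the convention for $\Diamond$ being trivially consistent since $c(\infty)$ is finite.

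The easy inequality (weak duality) is direct. For any admissible $v$, i.e.~$1$-Lipschitz with respect to $d$ and with $v(\Diamond)=0$, and for any $\lambda\in\Pi(\mu,\nu)$, the marginal conditions together with the Lipschitz estimate give
\[
\int_{\RH} v\, d\mu - \int_{\RH} v\, d\nu = \iint \bigl(v(x)-v(y)\bigr)\, d\lambda(x,y) \leq \iint c(|x-y|)\, d\lambda(x,y).
\]
Imposing $v(\Diamond)=0$ is no restriction because $\mu(\RH)=\nu(\RH)$, so adding any constant to $v$ leaves the objective unchanged.

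For the reverse direction, I would invoke the general Kantorovich duality for bounded continuous costs (\cite[Theorem~5.10]{Villani2}) applied to $c(|x-y|)$ on the Polish space $\RH\times\RH$, which equates the primal minimum with the supremum of $\int \phi\, d\mu + \int \psi\, d\nu$ over pairs $(\phi,\psi)$ satisfying $\phi(x)+\psi(y)\leq c(|x-y|)$. When the cost is itself a metric, this two-function dual collapses to the one-function form of the statement: replacing $\psi$ by the $c$-transform $\phi^c(y):=\inf_x[c(|x-y|)-\phi(x)]$ only increases the objective, and a short computation using the triangle inequality for $d$ shows that any $c$-concave $\phi$ is $1$-Lipschitz with respect to $d$, and that in this case $\phi^c=-\phi$ pointwise. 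Normalizing by subtracting $\phi(\Diamond)$ produces the Kantorovich potential $v$ asserted in the statement.

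Finally, the support condition is a consequence of complementary slackness: equality in the primal--dual pairing forces
\[
\iint \bigl(v(x)-v(y)-c(|x-y|)\bigr)\, d\lambda(x,y) = 0,
\]
and since the integrand is non-positive it vanishes $\lambda$-a.e.; continuity of $v$ (inherited from the Lipschitz condition in \eqref{cost} with respect to the Euclidean distance) and of $c$ then extends the equality to all of $\supp\lambda$. The main obstacle hidden in this plan is the existence of the maximizer $v$: this is the substantive content of \cite[Theorem~5.10]{Villani2}, typically proved either by $c$-cyclical monotonicity of the support of an optimal plan, or by Arzel\`a--Ascoli compactness on normalized $1$-Lipschitz candidates combined with a Hahn--Banach separation step.
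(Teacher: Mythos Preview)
Your proposal is correct, but note that the paper does not actually prove Theorem~\ref{duality}: it is stated as a quotation of the classical Kantorovich duality, with a reference to \cite[Theorem~5.10]{Villani2} and no further argument. What you have written is therefore not a comparison against an alternative proof but rather a self-contained sketch of why the cited result specializes to the stated form. Your reduction is the standard one and is sound: the conditions \eqref{cost} make $d(x,y)=c(|x-y|)$ a bounded continuous metric on the Polish space $\RH$, the general duality applies, the metric structure collapses the pair $(\phi,\psi)$ to $(v,-v)$ via $c$-transforms, and the support identity follows from complementary slackness together with continuity of $v$ and $c$. The normalization $v(\Diamond)=0$ is harmless because the masses agree, exactly as you say.
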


Since it is important for us, we emphasize that in our case any test function $v$ for the dual problem is bounded and Lipschitz in the euclidean distance:
\begin{equation}\label{costmax}
\sup_{x\in\RH} |v(x)| \le c(\infty) 
\quad\textrm{and}\quad \sup_{x,y\in\RN} \frac{|v(x)-v(y)|}{|x-y|} \le \sup_{t,s\in[0,\infty)} \frac{|c(t)-c(s)|}{|t-s|},
\end{equation}
In addition, information about the gradient of the Kantorovich potentials can also be extracted from the Duality Theorem.

\begin{corollary}\label{gradientofpot}
Suppose $c:[0,\infty]\to[0,\infty)$ is $C^1$ and satisfies \eqref{cost}. Let $\mu,\nu\in\M_+(\RH)$ with $\mu(\RH)=\nu(\RH)$. Let $\lambda\in\Pi(\mu,\nu)$ be an optimal plan for $C_c(\mu,\nu)$ and let $v$ be a corresponding Kantorovich potential.

Then there exists a set $E\subset\RN$ such that $\Leb^n(\RN\setminus E)=0$ and for any $(x,y)\in\supp\lambda\cap(E\times E)$ such that $x\ne y$ we have
\[
\nabla v(x)= \nabla v(y) = c'(|x-y|)\frac{x-y}{|x-y|} 
\]
In addition, $\nabla v(x)=0$, if $x\in E$ and $(x,\Diamond)\in\supp\lambda$ or $(\Diamond,x)\in\supp\lambda$.
\end{corollary}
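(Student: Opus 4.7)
The plan is to combine Kantorovich duality (Theorem \ref{duality}) with first-order optimality conditions, using Rademacher's theorem to access the pointwise differentiability of $v$.

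First I would observe that, by \eqref{costmax}, any Kantorovich potential $v$ is bounded and Lipschitz in the Euclidean distance on $\RN$. Rademacher's theorem therefore produces a Borel set $E\subset\RN$ with $\Leb^n(\RN\setminus E)=0$ at which $v$ is differentiable; this is the set $E$ claimed in the statement.

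For the interior case, fix $(x,y)\in\supp\lambda\cap(E\times E)$ with $x\neq y$. Theorem \ref{duality} provides two pieces of information: the pointwise equality $v(x)-v(y)=c(|x-y|)$ on $\supp\lambda$, and the global $c$-Lipschitz inequality $v(z)-v(w)\le c(|z-w|)$ on $\RH\times\RH$. Holding $y$ fixed, these two facts mean that the function $z\mapsto v(z)-c(|z-y|)$ achieves its global maximum at $z=x$. Since $c\in C^1$ and $x\neq y$, the map $z\mapsto c(|z-y|)$ is differentiable at $x$ with gradient $c'(|x-y|)(x-y)/|x-y|$, while $v$ is differentiable at $x\in E$. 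The first-order optimality condition therefore yields
\[
\nabla v(x)=c'(|x-y|)\,\frac{x-y}{|x-y|}.
\]
Swapping the roles, i.e.\ holding $x$ fixed and noting that $z\mapsto v(z)+c(|x-z|)$ attains its global minimum at $z=y\in E$, gives $\nabla v(y)$ equal to the same vector.

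For the boundary alternatives, I would run the identical first-order argument against the constant ceiling $c(\infty)$. Assume $x\in E$ and $(x,\Diamond)\in\supp\lambda$. By Theorem \ref{duality} and the normalization $v(\Diamond)=0$, we have $v(x)=c(\infty)$; but \eqref{costmax} shows $v\le c(\infty)$ everywhere on $\RH$, so $v$ attains a global maximum at $x$ and differentiability forces $\nabla v(x)=0$. The symmetric case $(\Diamond,x)\in\supp\lambda$ produces $v(x)=-c(\infty)$, which is a global minimum, and the same conclusion follows. There is no real obstacle here beyond checking that $z\mapsto c(|z-y|)$ is smooth at $x$ whenever $x\neq y$, which is immediate from $c\in C^1$; the whole corollary is a clean application of duality plus Rademacher.
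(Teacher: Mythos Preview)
Your proof is correct and follows essentially the same approach as the paper: Rademacher's theorem to produce $E$, then first-order optimality at the extremizer of $z\mapsto v(z)\pm c(|z-\cdot|)$ (the paper phrases this via the infimum representation $v(x)=\inf_{y'}\{c(|x-y'|)+v(y')\}$ and differentiates in $y'$ first, while you use the equivalent Lipschitz-plus-equality formulation and differentiate in $z$ first, but the content is identical). The boundary case with $\Diamond$ is handled the same way in both proofs.
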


\begin{proof}
Since $v$ is Lipschitz in the euclidean distance, we can apply the Rademacher Theorem to find a set $E\subset \RN$ such that $\Leb^n(\RN\setminus E)=0$ and $v$ is differentiable in $E$.

Now, given $(x,y)\in\supp\lambda\cap (E\times E)$ we have by the Duality Theorem
\[
c(|x-y|)+v(y)=v(x)= \inf_{y'\in\RN}\{ c(|x-y'|)+v(y') \}.
\]
Thus, if $x\ne y$ we can differentiate the function $f(y')=c(|x-y'|)+v(y')$ at its minimum point $y\in E$, which gives
\[
c'(|x-y|)\frac{y-x}{|x-y|}+\nabla v(y)=0, \quad \textrm{i.e.}\quad \nabla v(y)=c'(|x-y|)\frac{x-y}{|x-y|}.
\]
Similarly, the Duality Theorem implies
\[
c(|x-y|)-v(x)=-v(y)= \inf_{x'\in\RN}\{ c(|x'-y|)-v(x') \},
\]
from which we obtain by differentiation
\[
\nabla v(x) =c'(|x-y|)\frac{x-y}{|x-y|}.
\]

If $x\in E$ and $(x,\Diamond)\in\supp\lambda$, then using the Duality Theorem we see that
\[
c(\infty)-v(x)=0= \inf_{x'\in\RN}\{ c(\infty)-v(x') \},
\]
and again differentiation gives $\nabla v(x)=0$. The case where $(\Diamond,x)\in\supp\lambda$ can be handled similarly.
\end{proof}

In the uniqueness part of the proof on Theorem \ref{mainTHM} we will deal with certain integrals involving the gradient of a Kantorovich potential. These integrals can be estimated easily using optimal transport theory:

\begin{corollary}\label{firsttermest}
Suppose $c:[0,\infty]\to[0,\infty]$ is $C^1$ and satisfies \eqref{cost}. Let $\mu,\nu\in\M_+(\RH)$ with $\mu(\RH)=\nu(\RN)$ and let $v:\RH\to\R$ be a Kantorovich potential for $C_c(\mu,\nu)$. Assume that $\mu$ and $\nu$ are  mutually  singular, $\mu,\nu<<\Leb^n$ and $b\in L^1(\mu)\cap L^1(\nu)$. Then
\begin{align*}
& \bigg| \int_{\RN}\langle b, \nabla v \rangle d(\mu-\nu) \bigg| \\
\le \quad & \min\{\mu(\RN),\nu(\RN)\} \sup_{x,y\in F} \left| c'(|x-y|) \big\langle b(x)-b(y), \frac{x-y}{|x-y|} \big\rangle \right|,
\end{align*}
where $F:=(\supp\mu\cup\supp\nu)\cap\RN$.
\end{corollary}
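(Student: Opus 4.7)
The plan is to exploit an optimal plan $\lambda\in\Pi(\mu,\nu)$ for $C_c(\mu,\nu)$ to pull the integrand into a double integral, and then to use Corollary \ref{gradientofpot} to compute $\nabla v$ explicitly on $\supp\lambda$. Concretely, let $E\subset\RN$ be the full-Lebesgue-measure set from Corollary \ref{gradientofpot}. Since $\mu,\nu\ll\Leb^n$, we have $\mu(\RN\setminus E)=\nu(\RN\setminus E)=0$, so in all integrals below we may restrict to points in $E$. Using that $\mu$ and $\nu$ are the marginals of $\lambda$, I rewrite
\[
\int_{\RN}\langle b,\nabla v\rangle\,d(\mu-\nu) =\int_{\RH\times\RH}\bigl[\chi_{\RN}(x)\langle b(x),\nabla v(x)\rangle-\chi_{\RN}(y)\langle b(y),\nabla v(y)\rangle\bigr]d\lambda(x,y),
\]
and split $\RH\times\RH$ into the four pieces $\RN\times\RN$, $\RN\times\{\Diamond\}$, $\{\Diamond\}\times\RN$, and $\{(\Diamond,\Diamond)\}$.

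Next I would show that the three pieces involving $\Diamond$ contribute zero. The piece $\{(\Diamond,\Diamond)\}$ is trivial. For $(x,\Diamond)\in\supp\lambda$ with $x\in E$, Corollary \ref{gradientofpot} gives $\nabla v(x)=0$; since $\lambda$-almost every $(x,\Diamond)$ with $x\in\RN$ lies in $\supp\lambda\cap(E\times\RH)$, the integrand vanishes $\lambda$-a.e.\ on $\RN\times\{\Diamond\}$. The symmetric argument handles $\{\Diamond\}\times\RN$.

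Now I deal with $\RN\times\RN$. The diagonal $D=\{(x,x):x\in\RN\}$ has $\lambda(D)=0$: if $\pi$ denotes the first (equivalently, the second) marginal of $\lambda\llcorner_{D}$, then $\pi\le\mu$ and $\pi\le\nu$, and mutual singularity of $\mu,\nu$ forces $\pi=0$. For $\lambda$-a.e.\ $(x,y)\in\RN\times\RN$ we therefore have $x\neq y$ and $x,y\in E$, so Corollary \ref{gradientofpot} yields
\[
\nabla v(x)=\nabla v(y)=c'(|x-y|)\frac{x-y}{|x-y|},
\]
and consequently the integrand equals
\[
c'(|x-y|)\Bigl\langle b(x)-b(y),\frac{x-y}{|x-y|}\Bigr\rangle.
\]
Taking absolute values and pulling the supremum outside (over $F\times F$, since $\supp\lambda\cap(\RN\times\RN)\subset(\supp\mu\times\supp\nu)\subset F\times F$), I get
\[
\Bigl|\int_{\RN}\langle b,\nabla v\rangle\,d(\mu-\nu)\Bigr| \le \lambda(\RN\times\RN)\cdot\sup_{x,y\in F}\Bigl|c'(|x-y|)\langle b(x)-b(y),\tfrac{x-y}{|x-y|}\rangle\Bigr|.
\]
To conclude, it remains to observe that $\lambda(\RN\times\RN)\le\min\{\mu(\RN),\nu(\RN)\}$, which follows from the marginal identities $\lambda(\RN\times\RH)=\mu(\RN)$ and $\lambda(\RH\times\RN)=\nu(\RN)$ together with $\lambda(\RN\times\RN)\le\lambda(\RN\times\RH)$ and $\lambda(\RN\times\RN)\le\lambda(\RH\times\RN)$.

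The main obstacle is purely bookkeeping: one must treat the auxiliary point $\Diamond$ carefully (so that $b$, which lives only on $\RN$, never gets evaluated there) and must justify the $\lambda$-null set arguments, namely that $\mu,\nu\ll\Leb^n$ lets us restrict to $E$ and that mutual singularity kills the diagonal. Neither presents a real difficulty once the decomposition of $\RH\times\RH$ is laid out, and the explicit form of $\nabla v$ on $\supp\lambda$ supplied by Corollary \ref{gradientofpot} does all the analytic work.
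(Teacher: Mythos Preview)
Your proof is correct and follows essentially the same strategy as the paper: pass to the double integral via the optimal plan $\lambda$, use Corollary~\ref{gradientofpot} to evaluate $\nabla v$ on $\supp\lambda$, and bound $\lambda(\RN\times\RN)$ by the smaller of the two masses. The only cosmetic difference is that the paper invokes mutual singularity up front (choosing a Borel set $A$ with $\mu(A)=0$, $\nu(\RN\setminus A)=0$, so that $\lambda$ is concentrated on $(\RN\setminus A)\times A$ and the diagonal is automatically avoided), whereas you argue directly that $\lambda$ gives zero mass to the diagonal via the marginal bound $\pi\le\min\{\mu,\nu\}=0$; both are equivalent bookkeeping.
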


\begin{proof}
Let $\lambda\in\Pi(\mu,\nu)$ be an optimal plan for $C_c(\mu,\nu)$. Since $\mu$ and $\nu$ are singular, there exists a Borel set $A\subset\RH$ such that $\mu(A)=0$ and $\nu(\RN\setminus A)=0$. Then $\lambda$ is concentrated on the set $(\RN\setminus A) \times A$.

Define functions $f,g:\RH\to\R$ by
\begin{align*}
f(x) & = \left\{ \begin{array}{ll} \langle b(x), \nabla v(x) \rangle, & x\in E \\
0, & x\in\RH\setminus E \end{array}\right. \\
g(y) & = \left\{ \begin{array}{ll} -\langle b(y), \nabla v(y) \rangle, & y\in E \\
0, & y\in\RH\setminus E \end{array}\right.
\end{align*}
Then $f\in L^1(\mu)$ and $g\in L^1(\nu)$, which means that
\begin{equation}\label{tp}
\int_{\RH} f \,d\mu + \int_{\RH} g\,d\nu = \int_{\RH\times \RH} \big(f(x)+g(y)\big)\,d\lambda(x,y).
\end{equation}
Here 
\[
\int_{\RH} f \,d\mu + \int_{\RH} g\,d\nu = \int_{\RN}\langle b, \nabla v \rangle d\mu -\int_{\RN}\langle b, \nabla v \rangle d\nu,
\]
since $\mu,\nu<<\Leb^n$ and $\Leb^n(\RN\setminus E)=0$. Thus we only need to estimate the right side of \eqref{tp}. 

To do this, we notice that $\lambda$ is concentrated on the set 
\[
\Lambda:= \big( \supp\mu\cap(E\cup\{\Diamond\})\cap \RH\setminus A \big) \times \big( \supp\nu\cap (E\cup\{\Diamond\})\cap A \big).
\]
For any $(x,y)\in\Lambda\cap\supp\lambda$ we can further apply Corollary \ref{gradientofpot}. This gives
\begin{align*}
& \int_{\RH\times \RH} \big(f(x)+g(y)\big)\,d\lambda(x,y) \\
= \quad & \int_{\Lambda\cap\supp\lambda\cap (E\times E)} \big( \Langle b(x), \nabla v(x) \Rangle - \Langle b(y), \nabla v(y) \Rangle \big) \, d\lambda(x,y)  \\
& 
+ \int_{\Lambda\cap\supp\lambda\cap ((E\times \{\Diamond\}) \cup (\{\Diamond\}\times E))} \big( f(x)+g(y) \big) \, d\lambda(x,y) \\
= \quad & \int_{\Lambda\cap\supp\lambda\cap (E\times E)} \Langle b(x)-b(y), \nabla v(x) \Rangle \, d\lambda(x,y) \\
= \quad & \int_{\Lambda\cap\supp\lambda\cap (E\times E)} \Langle b(x)-b(y), c'(|x-y|)\,\frac{x-y}{|x-y|} \Rangle d\lambda(x,y).
\end{align*}
Now the proof can be completed by taking the absolute value inside the integral and noticing that $\supp\lambda\subset F\times F$ and 
\[
\lambda(\Lambda\cap\supp\lambda\cap (E\times E))\le \min\{ \mu(\RN),\nu(\RN) \}.
\]
\end{proof}

Finally, we want to know that the total transport cost behaves well under convergence of measures.

\begin{lemma}\label{otandweakconvergence}
Suppose $c:[0,\infty]\to[0,\infty]$ satisfies conditions \eqref{cost}. Let $\rho_k,\rho\in\M(\RH)$, $k\in\N$, be measures such that $\rho_k(\RH)=\rho(\RH)=0$ for all $k\in\N$. Assume that $\rho_k\to\rho$ weakly as $k\to\infty$, that is
\[
\int_{\RH} \varphi \,d\rho_k \to \int_{\RH} \varphi \,d\rho \quad\textrm{for every}\; \varphi\in C^0(\RH). 
\] 
Then 
\[
C_c(\rho^+,\rho^-) \le \liminf_{k\to\infty} C_c(\rho_k^+,\rho_k^-).
\]
\end{lemma}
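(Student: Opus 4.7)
The plan is to use Kantorovich duality (Theorem \ref{duality}) together with the weak convergence hypothesis. The key observation is that any admissible dual test function for the limiting problem is automatically an admissible dual test function for each approximating problem, and belongs to $C^0(\RH)$.

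First, I would reduce to the nontrivial case $\rho^+(\RH)=\rho^-(\RH)>0$; if $\rho=0$ then $C_c(\rho^+,\rho^-)=0$ and the inequality is immediate. Since $\rho(\RH)=0$ and $\rho\in\M(\RH)$ has finite total variation, we have $\rho^+(\RH)=\rho^-(\RH)$, so Theorem \ref{duality} applies and yields
\[
C_c(\rho^+,\rho^-)=\sup_v\int_{\RH}v\,d(\rho^+-\rho^-)=\sup_v\int_{\RH}v\,d\rho,
\]
where the supremum ranges over $v:\RH\to\R$ with $v(\Diamond)=0$ and $\sup_{x\neq y}|v(x)-v(y)|/c(|x-y|)\leq 1$.

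Next, fix any such admissible $v$. By \eqref{costmax} and the boundedness of $c$ in \eqref{cost}, $v$ is bounded on $\RH$ and Lipschitz on $\RN$ in the euclidean distance. Since $\Diamond$ is isolated in $\RH$, continuity at $\Diamond$ is automatic, so $v\in C^0(\RH)$. The weak convergence hypothesis therefore gives
\[
\int_{\RH} v\,d\rho=\lim_{k\to\infty}\int_{\RH} v\,d\rho_k=\lim_{k\to\infty}\int_{\RH}v\,d(\rho_k^+-\rho_k^-).
\]

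Now the crucial point: the same $v$ is admissible in the dual formulation of $C_c(\rho_k^+,\rho_k^-)$, because the admissibility conditions $v(\Diamond)=0$ and the Lipschitz bound with respect to $c$ depend only on $v$, not on the measures. The duality inequality therefore yields, for each $k$,
\[
\int_{\RH}v\,d(\rho_k^+-\rho_k^-)\leq C_c(\rho_k^+,\rho_k^-),
\]
where in the trivial case $\rho_k^\pm(\RH)=0$ both sides are zero. Taking $\liminf_{k\to\infty}$ and then the supremum over admissible $v$ on the left side delivers
\[
C_c(\rho^+,\rho^-)\leq\liminf_{k\to\infty} C_c(\rho_k^+,\rho_k^-),
\]
as desired.

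The only genuinely delicate point is the verification that the Kantorovich potential lives in the test-function class $C^0(\RH)$ used to formulate weak convergence; this is precisely where the qualitative bounds \eqref{costmax} and the fact that $\Diamond$ is isolated are needed. Everything else is a direct application of the duality theorem and the definition of $\liminf$.
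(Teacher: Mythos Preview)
Your proof is correct and follows essentially the same approach as the paper: fix an admissible dual potential $v$, observe it is bounded and continuous on $\RH$, apply weak convergence, use the duality inequality for each $k$, then take $\liminf$ and finally the supremum over $v$. Your version is simply more explicit about the trivial cases and about why $v\in C^0(\RH)$, but the argument is the same.
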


\begin{proof}
Let $v:\RH\to\R$ be such that $v(\Diamond)=0$ and $|v(x)-v(y)|\le c(|x-y|)$ for all $x,y\in\RH$. Then $v$ is continuous and bounded, so it follows from weak convergence and the Duality Theorem that
\[
\int_{\RH} v \,d\rho = \lim_{k\to\infty} \int_{\RH} v \,d\rho_k  \le \liminf_{k\to\infty} C_c(\rho_k^+,\rho_k^-).
\]
Taking the supremum over $v$ we get the claim.
\end{proof}


\section{Proof of  Proposition \ref{mainPROP} } \label{proofmainPROP}

\noindent
In this section we  prove the following more general version of Proposition \ref{mainPROP}.  

\begin{proposition}\label{moregeneralPROP}
Let $b:[0,T]\times\RN\to\RN$ be a vector field. Assume that there exists a continuous and nondecreasing function $G:[0,\infty)\to(0,\infty)$ satisfying $\int_r^\infty \frac{ds}{G(s)}=\infty$ for some $r>0$ such that
\[
\sup_{x\in\RN} \frac{|b(t,x)|}{G(|x|)} \in L^\infty(0,T) \tag*{(B1)}.  
\]
Assume also that there exists a continuous and nondecreasing function $\omega:[0,\infty)\to[0,\infty)$ satisfying $\int_0^r \frac{ds}{\omega(s)} = \infty$ for some $r>0$ such that
\[\label{b2s}
C_\omega:= \esup_{t\in(0,T)} \sup_{x, y\in\RN} \frac{|b(t,x)-b(t,y)|}{\omega(|x-y|)} <\infty. \tag*{(B2')}
\]
If $u\in L^1(0,T; L^1(\RN))$ is a weak solution of the Cauchy problem
\begin{equation}\label{CP}
\left\{ 
\begin{array}{rll}
\di_t u +\diver(b \,u) & =0 & \textrm{in}\; (0,T)\times\RN \\
u(0,\cdot) & =0 & \textrm{in}\; \RN,
\end{array}\right.
\end{equation}
then $u(t,\cdot)=0$ as $L^1(\R^n)$ functions, for each $t\in [0,T]$.  
\end{proposition}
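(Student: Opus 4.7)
My plan is to follow the optimal-transport strategy developed by Seis \cite{Seis}, adapted to the Osgood modulus of continuity setting. Since $b$ satisfies \ref{b1} and $u_0=0$, Lemma \ref{0int} applies and yields $u_t^+(\RN)=u_t^-(\RN)$ for almost every $t\in[0,T]$, where $u_t=u_t^+-u_t^-$ is the Jordan decomposition; the two summands are mutually singular and, because $u_t\in L^1(\RN)$, both are absolutely continuous with respect to $\Leb^n$. The goal is to prove $u_t=0$ by showing that the optimal transport cost $C_c(u_t^+,u_t^-)$ vanishes in an appropriate limit of bounded, concave cost functions $c$.

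For parameters $\delta\in(0,1)$ and $M>0$, I would construct a family $c_{\delta,M}$ satisfying all conditions in \eqref{cost} and, in addition,
$$c_{\delta,M}(s)\le M,\qquad c_{\delta,M}'(s)\,\omega(s)\le 1\;\;\textrm{for all }s>0,\qquad c_{\delta,M}(s)\to M\;\;\textrm{as }\delta\to 0^+\;\;\textrm{for every fixed }s>0.$$
The Osgood hypothesis $\int_0^r ds/\omega(s)=\infty$ is precisely what guarantees such a family exists: for example, one may start with $G_\delta(s)=\int_0^s dr/(\omega(r)+\omega(\delta))$, cap at $M$ and smooth to regain $C^1$ regularity.

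The heart of the argument is the differential estimate: for each $\delta,M>0$, the function $t\mapsto C_{c_{\delta,M}}(u_t^+,u_t^-)$ is absolutely continuous and satisfies
$$\frac{d}{dt}\,C_{c_{\delta,M}}(u_t^+,u_t^-)\le \frac{C_\omega}{2}\,\|u_t\|_{L^1(\RN)}$$
for almost every $t$. Formally, if $v_t$ is the Kantorovich potential from Theorem \ref{duality}, an envelope-type argument combined with the continuity equation gives $\frac{d}{dt}C_{c_{\delta,M}}(u_t^+,u_t^-)=\int\langle b,\nabla v_t\rangle\,du_t$, and Corollary \ref{firsttermest} together with $|b(t,x)-b(t,y)|\le C_\omega\,\omega(|x-y|)$ yields the upper bound $\tfrac12\|u_t\|_{L^1}\cdot C_\omega\cdot\sup_s c_{\delta,M}'(s)\omega(s)\le\tfrac12 C_\omega\|u_t\|_{L^1}$. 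Integrating in time against the vanishing initial datum produces $C_{c_{\delta,M}}(u_t^+,u_t^-)\le \frac{C_\omega}{2}\int_0^t\|u_s\|_{L^1}\,ds<\infty$, independently of $\delta$ and $M$. Sending $\delta\to 0^+$ with $M$ fixed: by mutual singularity of $u_t^\pm$, every transport plan is concentrated on $\{|x-y|>0\}$ and a standard lower-semicontinuity argument (as in Lemma \ref{otandweakconvergence}) yields $C_{c_{\delta,M}}(u_t^+,u_t^-)\to M\cdot u_t^+(\RN)$. Since $M$ is arbitrary, this forces $u_t^+(\RN)=0$ and hence $u_t=0$.

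I expect the principal difficulty to lie in the rigorous justification of the differential estimate. The Kantorovich potential $v_t$ is only Lipschitz in the Euclidean distance, hence not admissible as a test function in Definition \ref{CPsol}. One must mollify $v_t$ in space and localize via the cut-offs $\chi_k$ from \eqref{cutoff}, then pass to the limit in both parameters while controlling the resulting error terms using \ref{b1}, \ref{b2s} and the time integrability of $\|u_t\|_{L^1}$. The point at infinity $\Diamond$ introduced in Section \ref{masstransprelim} is the natural device for compensating the mass defects created by these truncations, so that the auxiliary transport problems between localized measures remain in the balanced-mass framework where Theorem \ref{duality} and Corollary \ref{firsttermest} apply.
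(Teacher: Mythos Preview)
Your proposal is correct and follows the same Seis-type optimal-transport scheme as the paper: build a concave cost from the Osgood modulus, establish a differential inequality for the transport cost via the Kantorovich potential and Corollary~\ref{firsttermest}, and handle the lack of compact support using the cut-offs $\chi_k$ and the point $\Diamond$. The rigorous justification of the differential estimate that you flag as the main difficulty is precisely what the paper carries out in Lemmas~\ref{costcontinuity} and~\ref{wdiff}.

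The one genuine difference lies in the endgame. The paper keeps a single cost $c_\delta(r)=\int_0^r\frac{ds}{\omega(s)+\delta}$ (made bounded by modifying $\omega$ at infinity), compares it to the reference distance $W(\mu,\nu)=\inf_\lambda\int\min\{|x-y|,1\}\,d\lambda$ through Lemma~\ref{comparison}, and then lets $\delta\to 0$ so that $c_\delta^{-1}(\cdot)\to 0$ and $c_\delta(1)\to\infty$, forcing $W(u_t^+,u_t^-)=0$. You instead introduce an explicit cap $M$, obtain a bound independent of both $\delta$ and $M$, and then let $\delta\to 0$ so that $c_{\delta,M}\to M\,\chi_{(0,\infty]}$; mutual singularity of $u_t^\pm$ guarantees that every plan lives off the diagonal, so the cost converges to $M\,u_t^+(\RN)$, and $M\to\infty$ finishes. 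Your route is slightly more elementary in that it bypasses Lemma~\ref{comparison} and the auxiliary metric $W$ altogether; the paper's route, on the other hand, is the one that generalises cleanly to the measure-valued setting of Theorem~\ref{mainTHM}, where the interplay of the four parameters $\alpha,\beta,\delta,k$ is organised around that comparison lemma. One small caveat: the ``lower-semicontinuity argument'' you invoke when sending $\delta\to 0$ is not Lemma~\ref{otandweakconvergence} (which varies the measures, not the cost) but rather a monotone-convergence/compactness argument for increasing costs with fixed marginals; this is straightforward, but you should state it separately.
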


\noindent
In order to prove Proposition \ref{moregeneralPROP}, we are given a function $u\in L^1(0,T;L^1(\RN))$, weak solution of the Cauchy problem \eqref{CP}. Associated to this solution $u$,  we define 
$$\aligned
\mu_k(t)&:= \chi_k u^+(t,\cdot)\Leb^n+m_k^+(t)\delta_\Diamond \\
\nu_k(t)&:= \chi_k u^-(t,\cdot)\Leb^n+m_t^-(t)\delta_\Diamond,\endaligned$$
where $m_k^\pm (t):= \left(-\int_{\RN} \chi_k(x) u(t,x)\,dx\right)^\pm $ , where the superscripts denote the positive and negative parts of a number, and $\chi_k$ are as in \eqref{cutoff} . Then $\mu_k(t)(\RH)=\nu_k(t)(\RH)$, and so we can consider the optimal transport problem
\[
D_{\delta,k}(t):= \inf_{\lambda\in\Pi(\mu_k(t),\nu_k(t))} \int_{\RH\times \RH} c_\delta(|x-y|)\,d\lambda(x,y),
\]
where the cost function is $c_\delta:[0,\infty]\to[0,\infty]$,
\[
c_\delta(r) = \int_0^r \frac{ds}{\omega(s)+\delta}.
\]
Since we can increase $\omega(s)$ when $s>0$ is large without affecting condition \ref{b2s}, we may assume that $\int_1^\infty \frac{ds}{\omega(s)} <\infty$. This ensures that $c_\delta$ is $C^1$, strictly increasing and satisfies conditions \eqref{cost}, so that all the results from the previous section are at our disposal.  Our strategy is to estimate the total cost $D_{\delta,k}$ and then use Lemma \ref{comparison} as $k\to \infty$ and $\delta\to 0$. 

\begin{lemma}\label{costcontinuity}
 If $k$ and $\delta$ are fixed, then $D_{\delta,k}(t)\to 0$  as $t\to 0$.
\end{lemma}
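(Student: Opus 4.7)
The plan is to estimate $D_{\delta,k}(t)$ via the Kantorovich dual formulation from Theorem \ref{duality}. First, the modifications made to $\omega$ guarantee that $c_\delta$ is bounded (set $M_\delta := c_\delta(\infty) < \infty$), $C^1$, concave, strictly increasing, and $(1/\delta)$-Lipschitz with respect to the Euclidean distance, so the conditions \eqref{cost} are satisfied. Theorem \ref{duality} then yields
\[
D_{\delta,k}(t) = \sup_{v} \left( \int_{\RH} v\, d\mu_k(t) - \int_{\RH} v\, d\nu_k(t) \right),
\]
where the supremum runs over $v:\RH\to\R$ with $v(\Diamond)=0$ and $|v(x)-v(y)|\le c_\delta(|x-y|)$. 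Any such $v$ satisfies $|v(x)|=|v(x)-v(\Diamond)| \le M_\delta$ on $\RN$, and since the $\Diamond$-contributions cancel one verifies directly from the definitions of $\mu_k(t)$, $\nu_k(t)$ that
\[
\int_{\RH} v\, d\mu_k(t) - \int_{\RH} v\, d\nu_k(t) = \int_{\RN} v(x)\,\chi_k(x)\, u(t,x)\, dx.
\]

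Next, I would bound this integral uniformly over admissible $v$ using the weak formulation of the continuity equation. Because $v$ is $(1/\delta)$-Lipschitz on $\RN$, the product $v\chi_k$ is supported in $B(0,R_k)$, bounded by $M_\delta$, and Lipschitz with constant $L_{k,\delta} := 1/\delta + M_\delta\|\nabla \chi_k\|_\infty$, which does not depend on $v$. Mollify $v\chi_k$ with a standard mollifier to produce $\varphi_\varepsilon\in C_c^\infty(\RN)$, supported in $B(0,R_k+1)$ for small $\varepsilon$, with $\|\varphi_\varepsilon\|_\infty\le M_\delta$, $\|\nabla \varphi_\varepsilon\|_\infty\le L_{k,\delta}$, and $\varphi_\varepsilon\to v\chi_k$ uniformly. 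Since $u(0,\cdot)=0$, applying \eqref{t0eq} with $\varphi_\varepsilon$ gives
\[
\int_{\RN}\varphi_\varepsilon\, u(t_0,x)\, dx = \int_0^{t_0}\int_{\RN} \langle b(s,x), \nabla \varphi_\varepsilon(x)\rangle\, u(s,x)\, dx\, ds.
\]
Using \ref{b1} to estimate $|b(s,x)|\le G(R_k+1)\, h(s)$ on $B(0,R_k+1)$, where $h(s):=\sup_x|b(s,x)|/G(|x|)\in L^\infty(0,T)$, the right-hand side is at most $L_{k,\delta}\, G(R_k+1)\, \|h\|_{L^\infty}\int_0^{t_0}\|u(s,\cdot)\|_{L^1}\, ds$. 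Letting $\varepsilon\to 0$ (using uniform convergence and $u(t_0,\cdot)\in L^1(\RN)$) and then taking supremum over $v$ we obtain
\[
D_{\delta,k}(t_0) \;\le\; L_{k,\delta}\, G(R_k+1)\, \|h\|_{L^\infty(0,T)} \int_0^{t_0}\|u(s,\cdot)\|_{L^1(\RN)}\, ds.
\]

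Since $u\in L^1(0,T;L^1(\RN))$, the map $s\mapsto \|u(s,\cdot)\|_{L^1}$ lies in $L^1(0,T)$, so by absolute continuity of the Lebesgue integral the right-hand side tends to $0$ as $t_0\to 0$, which is the claim. The only real (and minor) obstacle is justifying the passage from the $C_c^\infty$ test functions in \eqref{t0eq} to the merely Lipschitz function $v\chi_k$; this is precisely what the mollification achieves, and the essential point is that the Lipschitz constant $L_{k,\delta}$ is independent of $v$, so the estimate survives the supremum coming from the Kantorovich duality.
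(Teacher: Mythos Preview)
Your proof is correct and follows essentially the same approach as the paper: both use Kantorovich duality to reduce to bounding $\int_{\RN} v\chi_k\,u(t_0,\cdot)\,dx$ uniformly over admissible potentials $v$, then invoke \eqref{t0eq} together with the uniform Lipschitz bound on $v\chi_k$ and local boundedness of $b$ to dominate by $C\int_0^{t_0}\|u(s,\cdot)\|_{L^1}\,ds$. The paper simply asserts that Lipschitz test functions are admissible because $u\in L^1(0,T;L^1(\RN))$, whereas you justify this step explicitly via mollification; otherwise the arguments coincide.
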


\begin{proof}
By the Duality Theorem it is sufficient to prove that as $t\to 0$
\begin{equation}\label{tto0}
D_{\delta,k}(t)=\sup_{v\in \mathcal{F}} \int_{\RN} v(x)\chi_k(x)u(t,x)\,dx \to 0, 
\end{equation}
where 
\[
\mathcal{F}:=\{v\in C(\RH): \sup_{x\ne y}\frac{|v(x)-v(y)|}{c_\delta (|x-y|)} \le 1, \;\; v(\Diamond)=0\}.
\]
Since $u\in L^1(0,T;L^1(\RN))$ we can use Lipschitz functions in the definition of solutions of \eqref{CP}. In particular, we can follow the arguments in the proof of Lemma \ref{wcont} in the case where $\varphi=v\chi_k$, $v\in\mathcal{F}$. Thus we get the following version of \eqref{t0eq}:
\[
-\int_{\RN} v(x)\chi_k(x) u(t_0,x) \,dx +\int_0^{t_0} \int_{\RN} \Langle b(t,x) , \nabla(v\chi_k)(x) \Rangle u(t,x) \,dx\,dt  = 0.
\]
Using the fact that $|\nabla(v\chi_k)|\le \frac{1}{\delta}+\frac{2c_\delta(\infty)}{G(k)}$ we arrive at the estimate
\begin{align*}
\Big| \int_{\RN} v(x)\chi_k(x)u(t_0,x)\,dx \Big| \le C_b \Big( \frac{1}{\delta}+\frac{2c_\delta(\infty)}{G(k)} \Big) \int_0^{t_0} \int_{\RN} |u(t,x)|\,dx\,dt,
\end{align*}
where $C_b:=\|b\|_{L^\infty(0,T;L^\infty(B(0,R_k))}<\infty$. Since the bound on the right hand side is independent of $v\in\mathcal{F}$ and goes to zero as $t_0\to 0$ , we obtain \eqref{tto0}.
\end{proof}

\noindent
When we study the weak derivative of $D_{\delta,k}$ the Kantorovich potentials come into play. To that end we denote by $v_k(t,\cdot)$ the Kantorovich potential for $D_{\delta,k}(t)$.

\begin{lemma}\label{wdiff}
The total cost $D_{\delta,k}$ has a weak derivative
\begin{align*}
\di_t D_{\delta,k}(t) & = \int_{\RN} \big\langle b(t,x), \nabla v_k(t,x) \big\rangle\, \chi_k(x) u(t,x) \,dx \\
& \qquad + \int_{\RN} v_k(t,x) \,\big\langle b(t,x), \nabla \chi(x) \big\rangle\, u(t,x) \,dx.
\end{align*}
\end{lemma}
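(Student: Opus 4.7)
The approach I would take is to leverage the envelope structure of the dual characterization. By Theorem \ref{duality} together with $v_k(t,\Diamond)=0$, I would first rewrite
\[
D_{\delta,k}(t) = \int_{\RN} v_k(t,x)\,\chi_k(x)\,u(t,x)\,dx;
\]
crucially, since the constraints in the dual problem (vanishing at $\Diamond$, being $c_\delta$-Lipschitz) do not involve time, $v_k(s,\cdot)$ is admissible for the dual problem at every time $t$. Setting $F_k(t,s) := \int_{\RN} v_k(s,x)\chi_k(x)u(t,x)\,dx$, this gives the envelope relations $F_k(t,s) \le D_{\delta,k}(t) = F_k(t,t)$, hence the sandwich
\[
F_k(t+h,t) - F_k(t,t) \;\le\; D_{\delta,k}(t+h) - D_{\delta,k}(t) \;\le\; F_k(t+h,t+h) - F_k(t,t+h).
\]

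Next, for each fixed $s$ I would compute the time-derivative of $F_k(\cdot,s)$ via the continuity equation for $u$. Since $v_k(s,\cdot)\chi_k$ is Lipschitz and compactly supported (by \eqref{costmax} and \eqref{cutoff}), I would approximate it by mollification and use $\psi(t)\,v_k(s,x)\chi_k(x)$ with $\psi\in C_c^\infty(0,T)$ as a test function, arguing exactly as in the proof of Lemma \ref{wcont}; the local boundedness of $b$ granted by \ref{b1} together with $u\in L^1(0,T;L^1(\RN))$ justifies the passage to the limit in the mollification. The result is
\[
F_k(t+h,s) - F_k(t,s) = \int_t^{t+h}\!\int_{\RN} \langle b(\tau,y), \nabla(v_k(s,\cdot)\chi_k)(y)\rangle\,u(\tau,y)\,dy\,d\tau.
\]

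To identify $\partial_t D_{\delta,k}$ as a weak derivative, I would pair the sandwich with an arbitrary $\eta\in C_c^\infty(0,T)$, divide by $h$, and pass $h\to 0^+$. The middle term produces $-\int_0^T \eta'(t) D_{\delta,k}(t)\,dt$ via a standard change of variable. For the outer terms, writing $G(\tau,s) := \int_{\RN}\langle b(\tau,y),\nabla(v_k(s,\cdot)\chi_k)(y)\rangle u(\tau,y)\,dy$, the lower bound becomes $\int_0^T \eta(t)\,\tfrac{1}{h}\int_t^{t+h} G(\tau,t)\,d\tau\,dt$, which converges to $\int_0^T \eta(t) G(t,t)\,dt$ by Lebesgue differentiation and the uniform bound $|G(\tau,s)| \le C_{k,\delta}\|u(\tau,\cdot)\|_{L^1(\RN)}$ obtained from \eqref{costmax}, \eqref{cutoff}, and the local boundedness of $b$. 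The main obstacle, which I expect to be the hardest step, lies in the upper bound: after the substitution $u=t+h$ a drifting second variable $v_k(t+h,\cdot)$ appears inside $G$, and forcing its contribution to match $G(t,t)$ in the limit requires a suitable continuity of $t\mapsto v_k(t,\cdot)$. I would extract this from the uniform Euclidean-Lipschitz and $L^\infty$ bounds on $v_k$ (from \eqref{costmax}) via an Arzel\`a--Ascoli compactness argument, identifying any subsequential locally uniform limit as a Kantorovich potential for $D_{\delta,k}(t)$ by combining weak-$*$ continuity of the measures $\mu_k(t)-\nu_k(t)$ (provided by Lemma \ref{wcont}) with the lower-semicontinuity of the total transport cost given by Lemma \ref{otandweakconvergence}. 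Once both outer terms are shown to share the limit $\int_0^T \eta(t) G(t,t)\,dt$, expanding $\nabla(v_k\chi_k) = (\nabla v_k)\chi_k + v_k\,\nabla\chi_k$ inside $G(t,t)$ yields the claimed formula.
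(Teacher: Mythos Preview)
Your envelope/sandwich setup is the same starting point as the paper's, but the paper's execution is different and avoids your ``main obstacle'' entirely. After multiplying the one-sided inequality $D_{\delta,k}(t)-D_{\delta,k}(t-h)\le\int v_k(t,x)\chi_k(x)[u(t,x)-u(t-h,x)]\,dx$ by a nonnegative $\psi\in C_c^\infty(0,T)$ and changing variables in $t$, the paper recognizes $\psi(t)v_k(t,x)-\psi(t+h)v_k(t+h,x)$ as $\partial_t\bigl[\int_{t+h}^t\psi(s)v_k(s,x)\,ds\bigr]$ and uses $\chi_k(x)\int_{t+h}^t\psi(s)v_k(s,x)\,ds$ (mollified in $x$) directly as a test function in the continuity equation. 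This absorbs the drifting potential into the test function; dividing by $h$ and letting $h\to0$ yields the claimed right-hand side, and the reverse inequality comes from taking $h$ of the opposite sign. No continuity of $t\mapsto v_k(t,\cdot)$ is ever needed.

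Your own route can be made to work, but the Arzel\`a--Ascoli step you single out as the crux is both unnecessary and, as stated, insufficient. It is unnecessary because the upper bound is actually no harder than the lower: the only property of $v_k(t+h,\cdot)$ that enters $G(\tau,t+h)$ is the uniform bound $|\nabla(v_k(s,\cdot)\chi_k)|\le C_{k,\delta}$, which gives $|G(\tau,s)-G(s,s)|\le C_{k,\delta}\,\|b(\tau,\cdot)u(\tau,\cdot)-b(s,\cdot)u(s,\cdot)\|_{L^1(B(0,R_k))}$ \emph{uniformly in the second slot}; Lebesgue differentiation for the Bochner-integrable map $\tau\mapsto b(\tau,\cdot)u(\tau,\cdot)\in L^1(B(0,R_k))$ then handles both sides of the sandwich identically. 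It is insufficient because locally uniform convergence of $v_k(t+h,\cdot)$ does not give convergence of $\nabla v_k(t+h,\cdot)$ (only weak-$*$ in $L^\infty$ along subsequences), and since Kantorovich potentials are not unique, any subsequential limit may well be a potential different from the chosen $v_k(t,\cdot)$; you give no argument that the resulting value of $G$ is independent of this choice.
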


\begin{proof}
Our possible derivative is clearly in $L^1(0,T)$, so integrability is not an issue. Using the duality and the assumption that $v_k(t,\Diamond)=0$ we get for any $h\in\R$
\begin{equation}\label{difference}
\begin{split}
& \quad D_{\delta,k}(t)-D_{\delta,k}(t-h) \\
\le &  \int_{\RN} v_k(t,x)\chi_k(x)u(t,x)\,dx -\int_{\RN} v_k(t,x)\chi_k(x)u(t-h,x)\,dx.
\end{split}
\end{equation}
Let $\psi\in C^\infty_c(0,T)$, $\psi\ge 0$, be a test function and $h\in\R$ so small that $\supp(\psi)\subset(|h|,T-|h|)$. It follows from a change of variables that
\[
\int_0^T \psi(t)(D_{\delta,k}(t)-D_{\delta,k}(t-h))\,dt= \int_0^T (\psi(t)-\psi(t+h))D_{\delta,k}(t)\,dt
\]
In particular we see that
\begin{equation}\label{leftside}
\lim_{h\to 0} \frac{1}{h}\int_0^T \psi(t)(D_{\delta,k}(t)-D_{\delta,k}(t-h))\,dt = -\int_0^T \psi'(t)D_{\delta,k}(t)\,dt.
\end{equation}

Similarly it follows from a change of variables that
\begin{align*}
& \int_0^T\int_{\RN} \psi(t)v_k(t,x)\,\chi_k(x)(u(t,x)-u(t-h,x))\,dx\,dt \\
= & \int_0^T\int_{\RN} (\psi(t)v_k(t,x)-\psi(t+h)v_k(t+h,x))\chi_k(x) u(t,x)\,dx\,dt \\
= & \int_0^T\int_{\RN} \di_t \Big( \int_{t+h}^t \psi(s)v_k(s,x)\,ds \Big) \chi_k(x) u(t,x)\,dx\,dt.
\end{align*}
If we have a smooth function $v_\varepsilon$ instead of $v_k$ above, we can use the fact that $u$ is a solution to the continuity equation to get
\begin{equation}\label{useofCP}
\begin{split}
& \int_0^T\int_{\RN} \di_t \Big( \int_{t+h}^t \psi(s)v_\varepsilon(s,x)\,ds \Big) \chi_k(x) u(t,x)\,dx\,dt \\
= & -\int_0^T\int_{\RN} \langle b(t,x), \int_{t+h}^t\psi(s)\nabla v_\varepsilon(s,x)\,ds \rangle \chi_k(x) u(t,x)\,dx\,dt \\
& \quad -\int_0^T\int_{\RN} \int_{t+h}^t \psi(s) v_\varepsilon(s,x)\,ds \langle b(t,x),\nabla\chi_k(x) \rangle u(t,x)\,dx\,dt.
\end{split}
\end{equation}
Since $v_k$ is Lipschitz in the euclidean distance, we can approximate it with smooth functions $v_\varepsilon$ so that
\begin{align*}
& \lim_{\varepsilon\to 0} \int_0^T\int_{\RN} \psi(t)v_\varepsilon(t,x)\,\chi_k(x)(u(t,x)-u(t-h,x))\,dx\,dt \\
= \quad & \int_0^T\int_{\RN} \psi(t)v_k(t,x)\,\chi_k(x)(u(t,x)-u(t-h,x))\,dx\,dt 
\end{align*}
and
\begin{align*}
 & \lim_{\varepsilon\to 0} \bigg( \int_0^T\int_{\RN} \langle b(t,x), \int_{t+h}^t\psi(s)\nabla v_\varepsilon(s,x)\,ds \rangle \chi_k(x) u(t,x)\,dx\,dt \\
 & \quad + \int_0^T\int_{\RN} \int_{t+h}^t \psi(s) v_\varepsilon(s,x)\,ds \langle b(t,x),\nabla\chi_k(x) \rangle u(t,x)\,dx\,dt \bigg) \\
= \quad & \int_0^T\int_{\RN} \langle b(t,x), \int_{t+h}^t\psi(s)\nabla v_k(s,x)\,ds \rangle \chi_k(x) u(t,x)\,dx\,dt \\
 & + \int_0^T\int_{\RN} \int_{t+h}^t \psi(s) v_k(s,x)\,ds \langle b(t,x),\nabla\chi_k(x) \rangle u(t,x)\,dx\,dt.
\end{align*}
Thus we get 
\begin{align*}
& \int_0^T\int_{\RN} \psi(t)v_k(t,x)\,\chi_k(x)(u(t,x)-u(t-h,x))\,dx\,dt \\
= & -\int_0^T\int_{\RN} \langle b(t,x), \int_{t+h}^t\psi(s)\nabla v_k(s,x)\,ds \rangle \chi_k(x) u(t,x)\,dx\,dt \\
 & -\int_0^T\int_{\RN} \int_{t+h}^t \psi(s) v_k(s,x)\,ds \langle b(t,x),\nabla\chi_k(x) \rangle u(t,x)\,dx\,dt,
\end{align*}
and furthermore, if we divide by $h$ and let $h\to 0$ we obtain
\begin{equation}\label{rightside}
\begin{split}
& \lim_{h\to 0} \frac{1}{h} \int_0^T\int_{\RN} \psi(t)v_k(t,x)\,\chi_k(x)(u(t,x)-u(t-h,x))\,dx\,dt \\
= & \int_0^T\int_{\RN} \psi(t) \langle b(t,x), \nabla v_k(t,x) \rangle \chi_k(x) u(t,x) \,dx\,dt \\
& + \int_0^T\int_{\RN} \psi(t) v_k(t,x) \langle b(t,x), \nabla \chi_k(x)\rangle u(t,x)\, dx\,dt.
\end{split}
\end{equation}
Now we want to combine this with \eqref{difference} and \eqref{leftside}. The limits \eqref{leftside} and \eqref{rightside} do not depend on the sign of $h$, but the inequality \eqref{difference} changes depending on whether we divide the sides by positive or negative $h$.
%
Thus by combining \eqref{difference}, \eqref{leftside} and \eqref{rightside} we have in fact proved that
\begin{equation}\label{weakdiff}
\begin{split}
& -\int_0^T \psi'(t)D_{\delta,k}(t)\,dt \\
= & \int_0^T\int_{\RN} \psi(t) \langle b(t,x), \nabla v_k(t,x) \rangle \chi_k(x) u(t,x) \,dx\,dt \\
& + \int_0^T\int_{\RN} \psi(t) v_k(t,x) \langle b(t,x), \nabla \chi_k(x)\rangle u(t,x)\, dx\,dt.
\end{split}
\end{equation}
To conclude the proof we need to show \eqref{weakdiff} for general test functions $\psi\in C^\infty_c(0,T)$. This is done by approximating $\psi^+$ and $\psi^-$ by smooth functions for which \eqref{weakdiff} holds.
\end{proof}

\noindent
Now we are ready to prove Proposition \ref{moregeneralPROP}. Note that Proposition \ref{mainPROP} immediately follows by the linearity of the equation, since (A1) and (A2) are particular cases of \ref{b1} (with $G(t)=1+t$) and \ref{b2s} (with $\omega(s)=s\,\log(e+\frac1s)$).

\begin{proof}[Proof of Proposition \ref{moregeneralPROP}]
Fix $\delta>0$ and $k\in\N$. We note that using Lemmas \ref{costcontinuity} and \ref{wdiff}  we see that
\begin{equation}\label{costestimate}
\begin{split}
D_{\delta,k}(t)  \leq & \Big| \int_0^T \int_{\RN} \Langle b(t,x), \nabla v_k(t,x) \Rangle \chi_k(x)u(t,x) \,dx\,dt \Big| \\
& + \Big|\int_0^T \int_{\RN} v_k(t,x) \,\big\langle b(t,x), \nabla\chi_k(x) \big\rangle\, u(t,x) \,dx\,dt \Big|.
\end{split}
\end{equation}
The first term on the left hand side can be estimated by Corollary \ref{firsttermest}, where we take into account that 
\[
c_\delta'(|x-y|)=\frac{1}{\omega(|x-y|)+\delta} \le \frac{1}{\omega(|x-y|)}.
\]
This gives
\[
\Big| \int_0^T \int_{\RN} \Langle b(t,x), \nabla v_k(t,x) \Rangle \chi_k(x)u(t,x) \,dx\,dt \Big| \le C_\omega \int_0^T \|u(t,\cdot)\|_{L^1(\RN)} \,dt,
\]
where $C_\omega<\infty$ by assumption \ref{b2s}. For the second term we only need the bounds on $v_k$ and $\nabla\chi_k$:
\begin{align*}
& \Big|\int_0^T \int_{\RN} v_k(t,x) \,\big\langle b(t,x), \nabla\chi_k(x) \big\rangle\, u(t,x) \,dx\,dt \Big| \\
 \le \quad & 2 C_G \, c_\delta(\infty)  \int_0^T \int_{\RN\setminus B(0,k)} |u(t,x)|\,dx\,dt,
\end{align*}
where
\[
C_G:=\esup_{t\in(0,T)} \sup_{x\in\RN} \frac{|b(t,x)|}{G(|x|)}<\infty
\]
by assumption \ref{b1}. Thus we obtain $D_{\delta,k}(t) \le C_{\delta,k}$, where
\[
C_{\delta,k}:=C_\omega \int_0^T \|u(t,\cdot)\|_{L^1(\RN)}\,dt + 2C_G \, c_\delta(\infty) \int_0^T \int_{\RN\setminus B(0,k)} |u(t,x)|\,dx\,dt.
\]
Now we denote $\mu(t):=u^+(t,\cdot)\Leb^n$ and $\nu(t):=u^-(t,\cdot)\Leb^n$. Since $\int_{\RN} u(t,x) \,dx=0$ by Lemma \ref{0int}, we see that $\mu_k(t)-\nu_k(t) \to \mu(t)-\nu(t)$ weakly. This enables us to apply first Lemma \ref{otandweakconvergence} and then Lemma \ref{comparison}, which gives for any $\varepsilon>0$ and $\delta>0$ small enough
\begin{equation}\label{costestimate2}
\begin{split}
W(\mu(t),\nu(t)) &  \le \liminf_{k\to\infty} W(\mu_k(t),\nu_k(t)) \\
& \le \limsup_{k\to\infty} c_\delta^{-1} \Big(\frac{C_{\delta,k}}{\varepsilon}\Big)\|u(t,\cdot)\|_{L^1(\RN)}+\varepsilon+ \frac{C_{\delta,k}}{c_\delta(1)} \\
& = c_\delta^{-1}\Big(\frac{C}{\varepsilon}\Big)\|u(t,\cdot)\|_{L^1(\RN)}+\varepsilon+ \frac{C}{c_\delta(1)},
\end{split}
\end{equation}
where 
\[
C:= \lim_{k\to\infty} C_{\delta,k} = C_\omega \int_0^T \|u(t,\cdot)\|_{L^1(\RN)}\,dt<\infty.
\]
As $\delta\to 0$, our assumption on $\omega$ implies that $c_\delta^{-1}(\frac{C}{\varepsilon}) \to 0$ and $c_\delta(1)\to \infty$. Thus it follows from \eqref{costestimate2} that $W(\mu(t),\nu(t)) \le \varepsilon$. Since this holds for any $\varepsilon>0$, we see that $W(\mu(t),\nu(t))=0$, which implies $\mu(t)=\nu(t)$. In other words, $u=0$.
%
\end{proof}

\section{Proof of  Theorem \ref{mainTHM} }\label{proofmainTHM}

\noindent
Let $\rho\in L^1(0,T;\M(\RN))$ be a solution of \eqref{0eq}. We want to repeat the arguments in the previous section, but we run into trouble if we try to integrate the gradient of a Lipschitz function over general measure $\rho_t$. So we  regularize the measures $\rho_t$, and take into account the fact that the regularized measures might not be solutions of the same equation.

\begin{lemma}\label{regularizedCP}
Let $\rho\in L^1(0,T;\M(\RN))$ be a solution of \eqref{0eq} and $\eta \in C_c^\infty(\RN)$. Define $\rho_t^\eta:=\eta \ast \rho_t$. Then for every $\varphi\in C_c^\infty([0,T)\times\RN)$ we have
\begin{equation}\label{regCP}
\begin{split}
& \int_0^T \int_{\RN} \big[ \di_t\varphi(t,x)+\Langle b(t,x), \nabla \varphi(t,x) \Rangle \big] \rho_t^\eta(x)\,dx\, dt \\ 
= & \int_0^T\int_{R^n}\int_{R^n} \Langle (b(t,y)-b(t,x)), \nabla\varphi(t,y) \Rangle \eta(x-y) \,dy \,d\rho_t(x)\,dt.
\end{split}
\end{equation}
\end{lemma}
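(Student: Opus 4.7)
The plan is to test the weak formulation of the continuity equation from Definition~\ref{CPsol} against a smooth, compactly supported function built by convolving $\varphi$ with $\eta$, and then reorganize the resulting integrals by Fubini. Concretely, I would set
\begin{equation*}
\psi(t,x) := \int_{\RN} \varphi(t,y)\,\eta(x-y)\,dy,
\end{equation*}
which belongs to $C_c^\infty([0,T)\times\RN)$ since both $\varphi$ and $\eta$ are smooth and compactly supported. Differentiating under the integral gives $\di_t\psi(t,x) = \int \di_t\varphi(t,y)\,\eta(x-y)\,dy$, while writing $\nabla_x \eta(x-y) = -\nabla_y \eta(x-y)$ and integrating by parts in $y$ yields $\nabla_x\psi(t,x) = \int \nabla\varphi(t,y)\,\eta(x-y)\,dy$.

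Since $\rho_0 = 0$, plugging $\psi$ into Definition~\ref{CPsol} kills the initial term, and after a Fubini interchange I obtain
\begin{equation*}
\int_0^T \!\! \int_{\RN} \!\! \int_{\RN} \big[\di_t\varphi(t,y) + \langle b(t,x), \nabla\varphi(t,y)\rangle\big]\,\eta(x-y)\,dy\,d\rho_t(x)\,dt \,=\, 0.
\end{equation*}
The order-swap is harmless because $\eta$ is compactly supported, $\varphi$ is compactly supported in $[0,T) \times \RN$, $\rho_t$ has finite total variation for a.e.~$t$, and $b$ is bounded on compact sets in view of the continuity implied by~\ref{b2} together with the growth bound~\ref{b1}.

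In parallel, writing $\rho_t^\eta(y) = \int_{\RN} \eta(x-y)\,d\rho_t(x)$ and applying Fubini once more, I would re-express the left-hand side of~\eqref{regCP} as
\begin{equation*}
\int_0^T \!\! \int_{\RN} \!\! \int_{\RN} \big[\di_t\varphi(t,y) + \langle b(t,y), \nabla\varphi(t,y)\rangle\big]\,\eta(x-y)\,dy\,d\rho_t(x)\,dt.
\end{equation*}
Subtracting the previous identity from this expression cancels the $\di_t\varphi$ contribution and replaces $b(t,y)$ by $b(t,y) - b(t,x)$ in the $\nabla\varphi$ term, producing exactly the right-hand side of~\eqref{regCP}. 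The only points needing care are the integration by parts in $y$ and the two Fubini interchanges; both are elementary given the compact supports of $\eta$ and $\varphi$ and the local boundedness of $b$, so I do not foresee any real obstacle.
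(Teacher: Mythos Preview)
Your proof is correct and follows essentially the same approach as the paper: both define the convolved test function $\psi(t,x)=\int\varphi(t,y)\,\eta(x-y)\,dy$, compute its derivatives (using the integration-by-parts trick for $\nabla\psi$), feed $\psi$ into the weak formulation with $\rho_0=0$, and then subtract from the Fubini-rewritten left-hand side to isolate the $b(t,y)-b(t,x)$ commutator. The only differences are cosmetic (order of presentation and the roles of the dummy variables $x,y$).
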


\begin{proof}
Let $\varphi\in C_c^\infty([0,T)\times\RN)$. Then we use the definition of $\rho_t^\eta$ and Fubini's Theorem to calculate
\begin{equation}\label{convolution}
\begin{split}
& \int_0^T \int_{\RN} \big[ \di_t\varphi(t,x)+\Langle b(t,x), \nabla \varphi(t,x) \Rangle \big] \rho_t^\eta(x)\,dx\, dt \\ 
= & \int_0^T \int_{\RN} \big[ \di_t\varphi(t,x)+\langle b(t,x), \nabla \varphi(t,x) \rangle \big] \int_{\RN} \eta(x-y)\,d\rho_t(y)\,dx\,dt \\
= & \int_0^T \int_{\RN} \int_{\RN} \di_t\varphi(t,x)\eta(x-y)\,dx\,d\rho_t(y)\,dt \\
& \quad + \int_0^T \int_{\RN} \int_{\RN} \Langle b(t,x), \nabla\varphi(t,x) \Rangle \eta(x-y) dx\,d\rho_t(y)\,dt.
\end{split}
\end{equation}
Now, if we define $\psi(t,y):=\int_{\RN} \varphi(t,x)\eta(x-y)\,dx$, then $\psi\in C_c^\infty((0,T)\times\RN)$,
\begin{align*}
& \di_t\psi(t,y)=\int_{\RN} \di_t\varphi(t,x)\eta(x-y)\,dx \\
\textrm{and}\quad & \nabla\psi(t,y)= -\int_{\RN}\varphi(t,x)\nabla\eta(x-y)\,dx = \int_{\RN}\nabla\varphi(t,x)\eta(x-y)\,dx,
\end{align*}
where the last equality follows from integration by parts. Thus we may use the fact that $\rho$ solves \eqref{0eq} to get
\begin{align*}
& \int_0^T \int_{\RN} \int_{\RN} \di_t\varphi(t,x)\eta(x-y)\,dx\,d\rho_t(y)\,dt \\
= & \int_0^T \int_{\RN} \di_t\psi(t,y)\,d\rho_t(y)\,dt \\
= & -\int_0^T \int_{\RN} \langle b(t,y),\nabla\psi(t,y)\rangle d\rho_t(y)\,dt \\
= & -\int_0^T \int_{\RN}\int_{\RN} \Langle b(t,y), \nabla\varphi(t,x) \Rangle \eta(x-y)\,dx\,d\rho_t(y)\,dt.
\end{align*}
Combining this with \eqref{convolution} gives the claim.
\end{proof}

\noindent
Now we can proceed similarly to the previous section. For  fixed $\beta,\delta>0$ we define $c_{\beta,\delta}:[0,\infty]\to[0,\infty]$ by 
\[
c_{\beta,\delta}(r):= \beta \int_0^r \frac{ds}{\omega(s)+\delta}. 
\]
Then $c_{\beta,\delta}$ is $C^1$, strictly increasing and satisfies conditions \eqref{cost}, if we assume that $\int_r^\infty \frac{ds}{\omega(s)}<\infty$. As before, we may assume this, because modifying $\omega$ for large values does not affect the condition \ref{b2}. For $0<\alpha<1$ we choose a mollifier $\eta_\alpha\in C_c^\infty(\RN)$ such that $\eta_\alpha \ge 0$, $\int_{\RN} \eta_\alpha(x)\,dx=1$ and $\supp\eta_\alpha\subset B(0,\alpha)$. Let 
$$
\aligned
\mu_{\alpha,k}(t)&:=\chi_k (\eta_\alpha \ast \rho_t)^+ \Leb^n +m_{\alpha,k}^+(t) \\
 \nu_{\alpha,k}(t)&:=\chi_k (\eta_\alpha \ast \rho_t)^- \Leb^n +m_{\alpha,k}^-(t),
\endaligned$$
where $m_{\alpha,k}^\pm (t):=\left(-\int_{\RN} \chi_k(x)(\eta_\alpha \ast\rho_t)(x)\,dx\right)^\pm $, and $\chi_k$ as in \eqref{cutoff}. Then $\mu_{\alpha,k}(\RH)=\nu_{\alpha,k}(\RH)$. We denote
\[
D_{\alpha,\beta,\delta,k}(t) := \min \bigg\{ \int_{\RH\times\RH} c_{\beta,\delta}(|x-y|)\,d\lambda(x,y): \lambda\in\Pi(\mu_t,\nu_t) \bigg\},
\]
and let $v_{\alpha,\beta,\delta,k}(t,\cdot)$ be a Kantorovich potential for $D_{\alpha,\beta,\delta,k}(t)$. As before, we try to obtain estimates for $D_{\alpha,\beta,\delta,k}$ and then apply these estimates to our reference cost via Lemma \ref{comparison}. But first, we need a counterpart to Lemma \ref{costcontinuity}:

\begin{lemma}\label{costcontinuity2}
$D_{\alpha,\beta,\delta,k}(t)\to 0$, as $t\to 0$.
\end{lemma}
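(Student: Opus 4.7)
The plan is to mirror the argument of Lemma \ref{costcontinuity}, with the crucial modification that $\eta_\alpha\ast\rho_t$ is no longer a solution of the continuity equation, so I must use the perturbed identity \eqref{regCP} from Lemma \ref{regularizedCP} in place of the usual weak formulation.

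First, I would use Kantorovich duality (Theorem \ref{duality}) together with $v(\Diamond)=0$ to express
$$
D_{\alpha,\beta,\delta,k}(t)=\sup_{v\in\mathcal{F}_{\beta,\delta}}\int_{\RN}v(x)\chi_k(x)(\eta_\alpha\ast\rho_t)(x)\,dx,
$$
where $\mathcal{F}_{\beta,\delta}$ is the set of $v\in C(\RH)$ with $v(\Diamond)=0$ and $|v(x)-v(y)|\le c_{\beta,\delta}(|x-y|)$. By \eqref{costmax} the elements of $\mathcal{F}_{\beta,\delta}$ are uniformly bounded by $c_{\beta,\delta}(\infty)$ and uniformly $\beta/\delta$-Lipschitz on $\RN$ in the Euclidean distance. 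So the claim reduces to producing a bound on $|\int v\chi_k(\eta_\alpha\ast\rho_t)\,dx|$ that is uniform in $v\in\mathcal{F}_{\beta,\delta}$ and vanishes as $t\to 0$.

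To obtain such a bound I would insert $\varphi(t,x)=\psi_{t_0,\varepsilon}(t)v(x)\chi_k(x)$ into \eqref{regCP}, after first smoothing $v\chi_k$ in space by mollification so as to respect the regularity demanded in Definition \ref{CPsol}, and then passing to the limit by dominated convergence using the uniform bounds on $v$ and $\nabla v$ and the smoothness of $\chi_k$. Letting $\varepsilon\to 0$ exactly as in the proof of Lemma \ref{wcont} and using $\rho_0=0$ gives the identity
\begin{align*}
\int v\chi_k(\eta_\alpha\ast\rho_{t_0})\,dx
&=\int_0^{t_0}\!\int\big\langle b(t,x),\nabla(v\chi_k)(x)\big\rangle(\eta_\alpha\ast\rho_t)(x)\,dx\,dt\\
&\quad-\int_0^{t_0}\!\int\!\int\big\langle b(t,y)-b(t,x),\nabla(v\chi_k)(y)\big\rangle\eta_\alpha(x-y)\,dy\,d\rho_t(x)\,dt.
\end{align*}

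Finally, I would estimate each of the two terms uniformly in $v$. On $\supp\chi_k\subset B(0,R_k)$ one has the uniform gradient bound $|\nabla(v\chi_k)|\le \beta/\delta+2c_{\beta,\delta}(\infty)/G(k)$, while \ref{b1} bounds $|b(t,\cdot)|$ on $B(0,R_k+\alpha)$ by a function in $L^\infty(0,T)$; together with $\|\eta_\alpha\ast\rho_t\|_{L^1}\le\|\rho_t\|_{TV}$ this controls the first term by $C_1\int_0^{t_0}\|\rho_t\|_{TV}\,dt$. For the second term, $\eta_\alpha(x-y)$ forces $|x-y|<\alpha$, so \ref{b2} yields $|b(t,y)-b(t,x)|\le \omega(\alpha)M(t)$ on the relevant region with $M\in L^\infty(0,T)$; combined with $\int\eta_\alpha=1$ this gives a bound $C_2\int_0^{t_0}\|\rho_t\|_{TV}\,dt$. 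Both constants are independent of $v$, and since $\rho\in L^1(0,T;\M(\RN))$ the two upper bounds vanish as $t_0\to 0$ by absolute continuity of the Lebesgue integral. The main technical subtlety is the approximation step that legitimizes using the merely Lipschitz function $v\chi_k$ as a test function in \eqref{regCP}, but this should be routine via mollification.
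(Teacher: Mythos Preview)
Your proposal is correct and follows exactly the approach the paper sketches: repeat the proof of Lemma~\ref{costcontinuity}, replacing the weak formulation of the equation by the perturbed identity \eqref{regCP} from Lemma~\ref{regularizedCP}, which produces the extra commutator term you identify and estimate. You have simply supplied the details the paper leaves to the reader, including the mollification step for $v\chi_k$ and the observation that $\eta_\alpha(x-y)\neq 0$ together with $\nabla(v\chi_k)(y)\neq 0$ localizes $x,y$ to $B(0,R_k+\alpha)$ so that \ref{b2} applies.
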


\begin{proof}
Just repeat the proof of Lemma \ref{costcontinuity}, but instead of using the definition of a solution to \eqref{CP} use Lemma \ref{regularizedCP}. This gives an extra term, which fortunately causes no problems.
\end{proof}

\noindent
Combining the proof of Lemma \ref{wdiff} with Lemma \ref{regularizedCP} we get the following result.

\begin{lemma}\label{wdiff2}
The cost $D_{\alpha,\beta,\delta,k}$ has a weak derivative:
\begin{align*}
\di_t D_{\alpha,\beta,\delta,k}(t) & = \int_{\RN} \big\langle b(t,x), \nabla v_{\alpha,\beta,\delta,k}(t,x) \big\rangle\, \chi_k(x) (\eta_\alpha \ast \rho_t)(x) \,dx \\
& \qquad + \int_{\RN} v_{\alpha,\beta,\delta,k}(t,x) \,\big\langle b(t,x), \nabla \chi(x) \big\rangle\, (\eta_\alpha\ast\rho_t)(x) \,dx \\
& \qquad + \int_{\RN} \int_{\RN} \Langle b(t,y)-b(t,x), \nabla \varphi_{\alpha,\beta,\delta,k}(t,x) \Rangle \eta_\alpha(y-x) \,dy\, d\rho_t(x)\,dt,
\end{align*}
where $\varphi_{\alpha,\beta,\delta,k}(t,x)=v_{\alpha,\beta,\delta,k}(t,x)\chi_k(x)$.
\end{lemma}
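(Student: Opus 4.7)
The strategy is to mimic the proof of Lemma \ref{wdiff} step by step, replacing every use of ``$u$ solves the continuity equation \eqref{CP}'' by its modified version, Lemma \ref{regularizedCP}. Since $\eta_\alpha\ast\rho_t$ is no longer a solution of the original continuity equation, the commutator-type term on the right-hand side of \eqref{regCP} survives and produces exactly the third summand in the claimed formula. First I would verify integrability of the candidate derivative: the first two terms are estimated as in Lemma \ref{wdiff} using the boundedness of $v_{\alpha,\beta,\delta,k}$ and the bounds on $\nabla\chi_k$ together with \ref{b1}, while the third term is controlled using \ref{b2} together with the uniform Lipschitz bound \eqref{costmax} on $\nabla\varphi_{\alpha,\beta,\delta,k}$, so the candidate derivative belongs to $L^1(0,T)$.

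Next I would fix a nonnegative test function $\psi\in C_c^\infty(0,T)$ and, for $|h|$ small, use the duality theorem to derive the inequality
\[
D_{\alpha,\beta,\delta,k}(t)-D_{\alpha,\beta,\delta,k}(t-h)\le \int_{\RN} v_{\alpha,\beta,\delta,k}(t,x)\chi_k(x)\bigl((\eta_\alpha\ast\rho_t)(x)-(\eta_\alpha\ast\rho_{t-h})(x)\bigr)\,dx,
\]
exactly as in \eqref{difference}, exploiting the duality formula and $v_{\alpha,\beta,\delta,k}(t,\Diamond)=0$. A change of variables in $t$ then gives, as in \eqref{leftside},
\[
\lim_{h\to 0}\frac{1}{h}\int_0^T\psi(t)\bigl(D_{\alpha,\beta,\delta,k}(t)-D_{\alpha,\beta,\delta,k}(t-h)\bigr)\,dt=-\int_0^T\psi'(t)D_{\alpha,\beta,\delta,k}(t)\,dt.
\]

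The heart of the argument is the computation of the right-hand side. As in the proof of Lemma \ref{wdiff}, a double change of variables converts the difference quotient into a time-derivative of an integrated test function; because $v_{\alpha,\beta,\delta,k}(t,\cdot)\chi_k$ is only Lipschitz in $x$, I would approximate it by standard smooth functions $v_\varepsilon$ so that $v_\varepsilon\chi_k\in C_c^\infty$ is admissible in Lemma \ref{regularizedCP}. Applying \eqref{regCP} now yields, in addition to the two terms present in \eqref{useofCP}, the commutator term
\[
\int_0^T\int_{\RN}\int_{\RN}\int_{t+h}^t\psi(s)\Langle b(t,y)-b(t,x),\nabla(v_\varepsilon\chi_k)(s,y)\Rangle\,ds\,\eta_\alpha(x-y)\,dy\,d\rho_t(x)\,dt.
\]
Passing $\varepsilon\to 0$ using the uniform Lipschitz bound on $v_{\alpha,\beta,\delta,k}$ and \ref{b2} (to control the integrand via $\omega$), and then dividing by $h$ and sending $h\to 0$, gives the analogue of \eqref{rightside} plus precisely the third claimed term evaluated at $\nabla\varphi_{\alpha,\beta,\delta,k}$.

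Putting everything together we obtain the weak identity for $\psi\ge 0$, and the general case $\psi\in C_c^\infty(0,T)$ follows by decomposing $\psi=\psi^+-\psi^-$ and approximating each piece by smooth nonnegative functions as at the end of the proof of Lemma \ref{wdiff}. The main obstacle will be the double approximation step: one must justify simultaneously (i) the smoothing $v_\varepsilon\to v_{\alpha,\beta,\delta,k}$ inside an integral containing the increment $b(t,y)-b(t,x)$ weighted by $\eta_\alpha$, and (ii) the passage $h\to 0$ in the difference quotient. Both are handled by dominated convergence, using the fact that the Lipschitz bound \eqref{costmax} on $\nabla v_{\alpha,\beta,\delta,k}$, the modulus of continuity estimate \ref{b2}, the compact support of $\eta_\alpha$ and $\chi_k$, and the integrability $\rho\in L^1(0,T;\M(\RN))$ together provide an $L^1$ dominant independent of $\varepsilon$ and $h$.
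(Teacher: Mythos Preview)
Your proposal is correct and follows exactly the approach indicated in the paper: the paper's own proof is the single sentence ``Repeat the proof of Lemma \ref{wdiff}, but for equality \eqref{useofCP} use Lemma \ref{regularizedCP}, which gives the extra term,'' and your plan is a faithful expansion of precisely this strategy, including the same smoothing $v_\varepsilon\to v_{\alpha,\beta,\delta,k}$, the same $h\to 0$ passage, and the same $\psi^\pm$ decomposition at the end.
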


\begin{proof}
Repeat the proof of Lemma \ref{wdiff}, but for equality \eqref{useofCP} use Lemma \ref{regularizedCP}, which gives the extra term.
\end{proof}

\noindent
Now it is easy to estimate the actual transport cost.

\begin{lemma}\label{costestimate}
We have the estimate
\begin{align*}
D_{\alpha,\beta,\delta,k}(t) \le & \beta C_{\omega,k} \int_0^T |\rho_t|(\RN)\,dt \\
&  + 2\beta C_G \int_0^\infty \frac{ds}{\omega(s)+\delta} \int_0^T |\rho_t|(\RN\setminus B(0,k-1))\,dt \\
& +C_{\omega,k} \, \omega(\alpha) \Big( \frac{\beta}{\delta}+ \frac{\beta}{G(k)}\int_0^\infty \frac{ds}{\omega(s)+\delta} \Big) \int_0^T|\rho_t|(\RN),
\end{align*}
where
\begin{align*}
C_{\omega,k} & :=\esup_{t\in(0,T)}\sup_{x,y\in B(0,R_k+1)} \frac{|b(t,x)-b(t,y)|}{\omega(|x-y|)} \\
\textrm{and}\quad C_G \; & :=\esup_{t\in(0,T)}\sup_{x\in\RN} \frac{|b(t,x)|}{G(|x|)}.
\end{align*}
\end{lemma}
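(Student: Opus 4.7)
The plan is to integrate the weak time derivative from Lemma \ref{wdiff2} on the interval $(0,t)$, using $D_{\alpha,\beta,\delta,k}(0)=0$ from Lemma \ref{costcontinuity2}, so that $D_{\alpha,\beta,\delta,k}(t)$ equals the time integral of a sum of three terms. I would then estimate each of the three terms separately, giving exactly the three contributions in the stated bound.

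For the first term $\int\langle b(t,\cdot),\nabla v_{\alpha,\beta,\delta,k}\rangle\,\chi_k(\eta_\alpha\ast\rho_t)\,dx$, I would apply Corollary \ref{firsttermest} to the mutually singular measures $\mu_{\alpha,k}(t)$ and $\nu_{\alpha,k}(t)$ (their supports on $\RN$ are disjoint since they are the positive and negative parts of the bounded function $\chi_k(\eta_\alpha\ast\rho_t)$, and the Diamond-masses at most one of which is nonzero). Since $c_{\beta,\delta}'(r)=\beta/(\omega(r)+\delta)$, and since the supports of both measures are contained in $B(0,R_k)\subset B(0,R_k+1)$, the pointwise bound inside the supremum is at most $\beta C_{\omega,k}$. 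Combined with $\min\{\mu_{\alpha,k}(\RN),\nu_{\alpha,k}(\RN)\}\le |\rho_t|(\RN)$ and integration in $t$, this yields the first summand.

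For the second term $\int v_{\alpha,\beta,\delta,k}\,\langle b(t,\cdot),\nabla\chi_k\rangle (\eta_\alpha\ast\rho_t)\,dx$, I would use the global bound $\|v_{\alpha,\beta,\delta,k}\|_\infty \le c_{\beta,\delta}(\infty)=\beta\int_0^\infty ds/(\omega(s)+\delta)$ together with $|\nabla\chi_k|\le 2/G(|x|)$ and $|b|/G(|x|)\le C_G$, so the integrand is bounded by $2\beta C_G\,c_{\beta,\delta}(\infty)/\beta \cdot |\eta_\alpha\ast\rho_t|\cdot \mathbf 1_{\{|x|>k\}}$. The support of $\eta_\alpha$ lies in $B(0,\alpha)\subset B(0,1)$, so after applying Fubini one sees $\int_{\{|x|>k\}} |\eta_\alpha\ast\rho_t|\,dx \le |\rho_t|(\RN\setminus B(0,k-1))$, producing the second summand.

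The third (error) term from Lemma \ref{wdiff2} is where the small factor $\omega(\alpha)$ is gained. Since $\eta_\alpha(y-x)$ forces $|y-x|<\alpha<1$, and since $\nabla\varphi_{\alpha,\beta,\delta,k}(t,x)=\nabla v_{\alpha,\beta,\delta,k}\,\chi_k + v_{\alpha,\beta,\delta,k}\nabla\chi_k$ vanishes for $|x|>R_k$, both $x$ and $y$ lie in $B(0,R_k+1)$ in the effective region of integration; thus $|b(t,y)-b(t,x)|\le C_{\omega,k}\,\omega(\alpha)$ by \ref{b2}. I would bound $|\nabla\varphi_{\alpha,\beta,\delta,k}|\le \beta/\delta + c_{\beta,\delta}(\infty)\cdot 2/G(k)$ using the Lipschitz constant of $c_{\beta,\delta}$ and the cutoff estimate \eqref{cutoff}, then integrate $\eta_\alpha$ in $y$ to obtain $1$, leaving the remaining factor $\int |\rho_t|(\RN)\,dt$. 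Collecting the three estimates yields the claim. The main obstacle is keeping careful track of supports — in particular, noticing that the mollification by $\eta_\alpha$ is what forces $B(0,k-1)$ (rather than $B(0,k)$) in the second summand and $B(0,R_k+1)$ (rather than $B(0,R_k)$) in the modulus-of-continuity estimate needed to bound the third.
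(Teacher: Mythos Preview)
Your proposal is correct and follows essentially the same approach as the paper's own proof: integrate the weak derivative from Lemma~\ref{wdiff2} using Lemma~\ref{costcontinuity2}, then bound the three resulting terms via Corollary~\ref{firsttermest}, the cutoff and potential bounds, and the modulus-of-continuity estimate, respectively. Your treatment is in fact slightly more explicit than the paper's about why $B(0,k-1)$ and $B(0,R_k+1)$ appear, and you correctly track the factor $\beta$ coming from $c_{\beta,\delta}'$.
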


\begin{proof}
Just integrate Lemma \ref{wdiff2} while taking Lemma \ref{costcontinuity2} into account. Lemma \ref{wdiff2} gives three terms, which we estimate. First, it follows from Corollary \ref{firsttermest} that
\begin{equation}\label{1st}
\begin{split}
& \Big| \int_0^T \int_{\RN} \big\langle b(t,x), \nabla v_{\alpha,\beta,\delta,k}(t,x) \big\rangle\, \chi_k(x) (\eta_\alpha \ast \rho_t)(x) \,dx \Big| \\
\le \quad & C_{\omega,k} \int_0^T |\rho_t|(\RN)\, dt.
\end{split}
\end{equation}
For the second term recall that 
\[
\big| \Langle b(t,x), \nabla\chi_k(x) \Rangle \big| \le |b(t,x)|\frac{2}{G(|x|)} \le 2C_G.
\]
Thus it is easy to see that
\begin{equation}\label{2nd}
\begin{split}
& \Big| \int_0^T \int_{\RN} v_{\alpha,\beta,\delta,k}(t,x) \,\big\langle b(t,x), \nabla \chi(x) \big\rangle\, (\eta_\alpha\ast\rho_t)(x) \,dx \Big| \\
\le \quad & 2C_G c_{\beta,\delta}(\infty) \int_0^T |\rho_t|(\RN\setminus B(0,k-1))\,dt.
\end{split}
\end{equation}
Finally, we note that for $\varphi_{\alpha,\beta,\delta,k}(t,x)=v_{\alpha,\beta,\delta,k}(t,x)\chi_k(x)$ we have
\[
|\nabla \varphi_{\alpha,\beta,\delta,k}| \le \frac{\beta}{\delta}+\frac{2 c_{\beta,\delta}}{G(k)},
\]
which implies
\begin{equation}\label{3rd}
\begin{split}
& \Big| \int_0^T \int_{\RN} \int_{\RN} \Langle b(t,y)-b(t,x), \nabla \varphi_{\alpha,\beta,\delta,k}(t,x) \Rangle \eta_\alpha(y-x) \,dy\, d\rho_t(x)\,dt \Big| \\
\le \quad & C_{\omega,k}\omega(\alpha)\Big( \frac{\beta}{\delta}+\frac{2 c_{\beta,\delta}(\infty)}{G(k)} \Big) \int_0^T |\rho_t|(\RN)\,dt.
\end{split}
\end{equation}
Combining the estimates \eqref{1st}, \eqref{2nd} and \eqref{3rd} and writing out $c_{\beta,\delta}(\infty)$ gives the claim.
\end{proof}

\noindent
Now we can prove the uniqueness part of Theorem \ref{mainTHM}.

\begin{proof}[Proof of uniqueness in Theorem \ref{mainTHM}]
It suffices to prove that if $\rho\in L^1(0,T;\M(\RN))$ solves \eqref{0eq}, then $\rho=0$. Fix $k\ge 2$, and choose parameters $\alpha$, $\beta$ and $\delta$ depending on $k$. For ease of notation we denote
\[
I:= \int_0^T |\rho_t|(\RN)\,dt \quad\textrm{and}\quad I_k:=\max \left\{\int_0^T |\rho_t|(\RN\setminus B(0,k-1))\,dt, \frac{1}{k} \right\}.
\]
Now we choose $\beta_k:=\frac{1}{C_{\omega,k}I+1}$, $\delta_k>0$ such that
\[
\int_0^\infty \frac{ds}{\omega(s)+\delta_k} = \frac{C_{\omega,k}I+1}{2(C_G+1)I_k}
\]
and $0<\alpha_k<1$ such that
\[
\omega(\alpha_k) \le \frac{1}{\Big( \frac{\beta_k}{\delta_k}+ \frac{\beta_k}{G(k)}\int_0^\infty \frac{ds}{\omega(s)+\delta_k} \Big) (C_{\omega,k}I+1)}.
\]
Now we set $\mu_k(t):=\mu_{\alpha_k,k}(t)$, $\nu_k(t):=\nu_{\alpha_k,k}(t)$ and $c_k:=c_{\beta_k,\delta_k}$. We consider the optimal transport problem $D_k(t):=D_{\alpha_k,\beta_k,\delta_k,k}(t)$. Using Lemma \ref{costestimate} we see that $D_k(t)\le 3$. Lemma \ref{comparison} then gives for any $\varepsilon>0$ the inequality
\begin{equation}\label{costest}
W(\mu_k(t),\nu_k(t)) \le c_k^{-1}(3/\varepsilon) +\varepsilon + \frac{3}{c_k(1)}.
\end{equation}
Since $\rho_t(\RN)=0$ by Lemma \ref{0int}, we see that $\mu_k(t)-\nu_k(t) \to \rho_t$ weakly. Thus we get from Lemma \ref{otandweakconvergence} 
\[
W(\rho_t^+,\rho_t^-) \le \liminf_{k\to\infty} W(\mu_k(t),\nu_k(t)).
\] 
Combining this with \eqref{costest} we see that to prove the Theorem it suffices to show that $c_k(r)\to\infty$ for any $r>0$ (which also implies that $c_k^{-1}(r)\to 0$ for any $r>0$). So let's do just that.\\
\\
Because of our choice of $\beta_k$ and $\delta_k$ we have
\[
c_k(\infty)=\beta_k\int_0^\infty \frac{ds}{\omega(s)+\delta_k} \to\infty, \quad\textrm{as}\; k\to\infty. 
\]
On the other hand, it holds that
\[
\beta_k\int_0^\infty \frac{ds}{\omega(s)+\delta_k} \le \beta_k\int_0^r \frac{ds}{\omega(s)+\delta_k} + \beta_k\int_r^\infty \frac{ds}{\omega(s)},
\]
where we can use the bound $\beta_k \le 1$ to see that
\[
\beta_k\int_r^\infty \frac{ds}{\omega(s)} \le \int_r^\infty \frac{ds}{\omega(s)} <\infty.
\]
Thus we obtain $c_k(r)=\beta_k\int_0^r \frac{ds}{\omega(s)+\delta_k}\to\infty$.
\end{proof}

\noindent
As mentioned in the introduction, the existence of solutions to the Cauchy problem for the continuity equation \eqref{CPb} in $L^1(0,T; L^1(\R^n))$ may fail in general. Indeed, given a vector field $b$ satisfying \ref{b1} and \ref{b2}, it admits a well defined flow map $\phi_t:\R^n\to\R^n$ solving
$$\begin{cases}\dot{\phi_t(x)}=b(t,\phi_t(x))\\\phi_0(x)=x\end{cases}$$
but it is far from clear wether $\phi_t$ will (or will not) preserve the set of absolutely continuous measures. In \cite{CJMO} (see also \cite{CJMO2}) this situation is analyzed in terms of $\diver(b)$. In particular, a smooth vector field $b$ is provided for which the flow does \emph{not} preserve Lebesgue measurable sets (both backward and forward in time). As shown in \cite{CJMO2}, and modulo other conditions, a certain sub-exponential degree of integrability for $\diver (b)$ suffices to guarantee the preservation of Lebesgue measurable sets. Below, we present some examples to show that the exponential integrability of the divergence $\diver(b)$ and the Osgood modulus of continuity  \ref{b2} are independent conditions. By simplicity we write the details in the plane.\\
\\
It is easy to find vector fields $b$ satisfying the Osgood continuity condition \ref{b2} but with $\diver(b)\notin L^{\infty}$. For instance, let $f_1, f_2:\R^2\to\R$ be $BMO$ functions with compact suport (in the next section we provide more information about the $BMO$ space). Then, define the vector field
$$
b(x):= (I_1*f_1(x),   I_1*f_2(x)),
$$
where $I_1(x)=c/ | x|$ is the Riesz potential of order $1$. Then $b$ belongs to the Zygmund class, and so satifies an Osgood condition. Clearly, $b$ has derivatives locally in $L^p$ for all $p<\infty$. Likewise, the divergence is
$$
\diver(b) = R_1*f_1 +   R_2*f_2 ,
$$
where $R_j$ denotes the $j$-th Riesz transform. Clearly, $\diver b\notin L^\infty$ in general, even though it is exponentially integrable because $\diver(b)\in BMO$. \\
\\
In order to get examples of vector fields $b$ satisfying \ref{b2} but with divergence not exponentially integrable we have to be more precise. Let $b$ a vector field with compact support, smooth in $\R^2\setminus \{(0,0)\}$ and such that in a convenient neighbourhood of the origin is defined by
$$
b(x,y)=(x f(x,y), y f(x,y) ),
$$
whith $f(x,y) =  \log(x^2+y^2)(\log(-\log(x^2+y^2)))$. When $\delta$ is small, the modulus of continuity $\omega(\delta)$ of $b$ is like $\delta \log(\delta^{-2}) \, \log(\log(\delta^{-2}))$ and so, $b$ satisfies the Osgood condition. Besides, in a neighbourhood of the origin,
$$
\diver (b) =2\,f(x,y) + \log(-\log(x^2+y^2)) +1)
$$
and then $\diver (b)$ is not exponentially integrable because $f$ does not admit any $BMO$ majorant. It is worth mentioning, though, that  $\diver (b)$ is subexponentially integrable.\\
\\
A slight modification gives an example of a vector field $b$ with exponentially integrable $\diver(b)$ and not satisfying the Osgood condition \ref{b2}. Consider the vector field $b$ with compact support, smooth in $\R^2\setminus \{(0,0)\}$ and such that in a convenient neighbourhood of the origin is defined by
$$
b(x,y)=(x g(x,y), -y g(x,y) ),
$$
whith $g(x,y) =  \log(x^2+y^2)(\log(-\log(x^2+y^2)))^2$. In this case, the modulus of continuity $\omega(\delta)$ of $b$ is as $\delta \log(\delta^{-2}) \, (\log(\log(\delta^{-2}))^2$ when $\delta$ is small, thus Osgood condition does not hold.
However, $\diver(b)$ is exponentially integrable because in a neighbourhood of the origin
$$
\diver (b) = 2\frac{x^2-y^2}{x^2+y^2} \left[ (\log(-\log(x^2+y^2)))^2 + 2  \log(-\log(x^2+y^2)) \right],
$$
that is, $|\diver (b)| \le C |\log(x^2+ y^2)|$ and so $\diver(b)$ has a $BMO$ majorant.

\section{  Proof of Theorem \ref{mainH1} } \label{applications}

\noindent
In this section, we prove Theorem \ref{mainH1}. In order to do so, we need to recall some fundamentals on quasiconformality. We say that a sense-preserving homeomorphism $\phi:\R^n\to \R^n$ is \emph{$K$-quasiconformal} if $\phi$ belongs to the local Sobolev space $W^{1,n}_{loc}(\R^n;\R^n)$ and its distributional differential $D\phi(x)$ satisfies the distortion inequality,
$$|D\phi(x)|^n\leq K\, J(x,\phi)$$
at almost every point $x\in\R^n$. Here $|D\phi(x)|$ denotes the operator norm of the differential matrix $D\phi(x)$, and $J(x,\phi)$ denotes the jacobian determinant $\det D\phi(x)$. The inverse of a $K$-quasiconformal map is a $K^{n-1}$-quasiconformal map. The composition of a $K_1$-quasiconformal map and a $K_2$-quasiconformal map is a $K_1K_2$-quasiconformal map. For more about this topic, we refer the interested reader to the monograph \cite{IM}.\\
\\
In classical harmonic analysis and Calder\'on-Zygmund theory, one introduces $H^1(\R^n)$ as the space of $L^1(\R^n)$ functions $h:\R^n\to \R$ such that all the Riesz transforms $R_jh =c_n\,P.V. \frac{x_j}{|x|^{n+1}}\ast h$ , $j=1,\dots,n$ belong to $L^1(\R^n)$ as well. One indeed can define
$$\|h\|_{H^1(\R^n)}:= \|h\|_{L^1(\R^n)}+\sum_{j=1}^n\|R_jh\|_{L^1(\R^n)}.$$
The Hardy space $H^1(\R^n)$ is the appropriate substitute in harmonic analysis for $L^1(\R^n)$, on which Calder\'on-Zygmund theory fails. It also appears as the topological predual of $BMO(\R^n)$, 
$$H^1(\R^n)^\ast = BMO(\R^n)$$
by Fefferman Theorem \cite{Feff}. Let us recall here that $BMO(\R^n)$ consists of functions $b:\R^n\to\R$ with \emph{bounded mean oscillation}, that is,
$$\|u\|_{BMO(\R^n)}=\sup_{B\subset \R^n}\inf_{c\in\R}\frac1{|B|}\int_B|u(x)-c| \,dx<\infty.$$
The dual pairing $\langle h, b\rangle$, $h\in H^1(\R^n)$ and $b\in BMO(\R^n)$, is given by an absolutely converging integral only in some particular cases, but not in general. It is also of interest for us the space $VMO(\R^n)$ of functions with \emph{vanishing mean oscillation}, which is the closure of 
$$C_c(\R^n)=\{f:\R^n\to\R; f\text{ is continuous and has compact support}\}$$
under the $\|\cdot\|_{BMO}$-topology. It is a proper, closed subspace of $BMO(\R^n)$, whose topological dual is precisely $H^1(\R^n)$, 
$$
VMO(\R^n)^\ast = H^1(\R^n).
$$
In Calder\'on-Zygmund theory, $BMO(\R^n)$ is the appropiate substitute for $L^\infty(\R^n)$, while $VMO(\R^n)$ is the one for $C_c(\R^n)$. The following theorem was proven by Reimann in \cite{Re2}.

\begin{theorem}[Reimann, \cite{Re2}]
Given $K\geq 1$ and an integer $n\geq 2$, there exists a constant $C(n, K)$ with the following property. If $\phi:\R^n\to\R^n$ is $K$-quasiconformal, and $b\in BMO(\R^n)$, then $b\circ\phi^{-1}\in BMO(\R^n)$. Moreover, one has
$$\frac{1}{C(n,K)}\,\|b\|_{BMO}\leq \|b\circ\phi^{-1}\|_{BMO}\leq C(n,K) \|b\|_{BMO}$$
for each $b\in BMO(\R^n)$. 
\end{theorem}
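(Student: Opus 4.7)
\medskip
\noindent\textbf{Proof plan.}
The plan is to reduce everything to a change of variables on individual balls, using two deep properties of $K$-quasiconformal maps: quasi-symmetry (Mori's theorem), and the higher integrability of the Jacobian (Gehring's theorem). First, I would fix a Euclidean ball $B\subset\R^n$ and set $\tilde B:=\phi^{-1}(B)$. Since $\phi\in W^{1,n}_{loc}$ satisfies Lusin's condition $(N)$, the change of variables formula gives $|B|=\int_{\tilde B} J(y,\phi)\,dy$ and, for every constant $c\in\R$,
\[
\frac{1}{|B|}\int_B |b\circ\phi^{-1}(x)-c|\,dx \;=\; \frac{1}{|B|}\int_{\tilde B} |b(y)-c|\,J(y,\phi)\,dy.
\]
By Mori's theorem, $\tilde B$ is contained in a Euclidean ball $B^*$ whose radius is at most $H(n,K)$ times the inner radius of $\tilde B$, so $|B^*|\leq C(n,K)|\tilde B|$; I would take $c=b_{B^*}$ and estimate the right-hand side on $B^*$.

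The heart of the argument is to bound $\int_{B^*}|b(y)-b_{B^*}|\,J(y,\phi)\,dy$. Gehring's theorem asserts that $J(\cdot,\phi)$ is an $A_\infty$ weight with constants depending only on $n$ and $K$; in particular, for every measurable $E\subset B^*$,
\[
\int_E J(y,\phi)\,dy \;\leq\; C_0\left(\frac{|E|}{|B^*|}\right)^{\!\delta}\int_{B^*} J(y,\phi)\,dy.
\]
Applied to the John--Nirenberg superlevel sets $\{y\in B^*:|b(y)-b_{B^*}|>t\}$, whose Lebesgue measure decays like $c_1 e^{-c_2 t/\|b\|_{BMO}}$, a standard layer-cake computation then yields
\[
\int_{B^*}|b(y)-b_{B^*}|\,J(y,\phi)\,dy \;\leq\; C(n,K)\,\|b\|_{BMO}\int_{B^*} J(y,\phi)\,dy.
\]
Since $A_\infty$ weights are doubling, $\int_{B^*}J\,dy\leq C(n,K)\int_{\tilde B}J\,dy=C(n,K)|B|$, and the upper bound $\|b\circ\phi^{-1}\|_{BMO}\leq C(n,K)\|b\|_{BMO}$ follows upon taking the supremum over $B$.

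For the lower bound I would use that $\phi^{-1}$ is itself $K^{n-1}$-quasiconformal and apply the upper bound with $\phi$ replaced by $\phi^{-1}$ and $b$ replaced by $b\circ\phi^{-1}$. The main obstacle in this scheme is Gehring's $A_\infty$ property of the Jacobian; quasi-symmetry, John--Nirenberg and the layer-cake decomposition are routine once that is available. In the present paper I would simply invoke \cite{Re2} (together with \cite{IM}) for the complete argument rather than reproduce it.
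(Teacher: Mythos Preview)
The paper does not prove this statement at all: it is quoted verbatim as a theorem of Reimann \cite{Re2} and used as a black box in the proof of Corollary \ref{H1sufficient}. Your final sentence already anticipates this correctly. For the record, the sketch you give---change of variables, quasi-symmetry to replace $\phi^{-1}(B)$ by a comparable Euclidean ball, Gehring's $A_\infty$ estimate for the Jacobian combined with John--Nirenberg via a layer-cake argument, and then the inverse map for the lower bound---is a sound and essentially standard route to Reimann's theorem, so nothing is missing from your plan; it is simply more than the paper itself provides.
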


\noindent
Reimann's result is optimal. More precisely, if $\phi:\R^n\to\R^n$ is a sense-preserving homeomorphism for which $C_\phi(b)=b\circ\phi$ induces a topological isomorphism on $BMO(\R^n)$, then necessarily $\phi$ is $K$-quasiconformal, for some $K\geq1$ depending only on $n$ and $\|C_\phi\|_{BMO}$, see \cite{astmich}. If we want to dualize the above statement, then the composition operator $C_{\phi}$ must be replaced by the transport operator $T_\phi$,
$$\langle T_\phi h, g\rangle = \langle h, C_{\phi}g\rangle.$$
The above equality may not make sense in general, even in the cases when the dual pairing $\langle\cdot,\cdot\rangle$ is a true integral. However, quasiconformal maps are known to preserve Lebesgue null sets, and so the change of variables formula holds. As a consequence, if $h\in L^1_{loc}$ then the transport operator $T_\phi$ maps the absolutely continuous measure $d\mu=h(x)\,dx$ to another absolutely continuous measure $T_\phi(d\mu)= \phi _\sharp(d\mu)$ whose density comes from the change of variables formula, 
$$T_\phi (d\mu) = h(\phi^{-1}(x))\,J(x,\phi^{-1})\,dx.$$
By abbusing notation, one could simply write 
$$T_\phi(h)=\phi_\sharp h= h(\phi^{-1})\,J(\cdot, \phi^{-1}).$$
The following corollary looks at this fact from the $H^1(\R^n)$ perspective. 

\begin{corollary}\label{H1sufficient}
Let $\phi:\R^n\to\R^n$ be $K$-quasiconformal. Then the transport operator $T_\phi$ maps measures with $H^1(\R^n)$ density boundedly into themselves. That is, if $h\in H^1(\R^n)$ then $T_\phi  h = h(\phi^{-1})\,J(\cdot,\phi^{-1})$ is an $H^1(\R^n)$ function, and
$$\|T_\phi h\|_{H^1(\R^n)}\leq C(n,K)\,\|h\|_{H^1(\R^n)},$$
for each $h\in H^1(\R^n)$.  
\end{corollary}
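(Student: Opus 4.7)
The strategy is to dualize Reimann's $BMO$-invariance theorem using Fefferman's duality $H^1(\R^n) = VMO(\R^n)^\ast$. Since $C_c(\R^n)$ is dense in $VMO(\R^n)$ in the $BMO$-norm, it suffices to verify that, for every $h \in H^1(\R^n)$ and every test function $g \in C_c(\R^n)$,
\begin{equation*}
\left| \int_{\R^n} (T_\phi h)(x)\, g(x)\, dx \right| \leq C(n,K)\, \|h\|_{H^1(\R^n)}\, \|g\|_{BMO(\R^n)}.
\end{equation*}
Once this is established, the linear functional $g \mapsto \int (T_\phi h)\, g\, dx$ initially defined on $C_c(\R^n)$ extends boundedly to $VMO(\R^n)$, and the Fefferman duality identifies this extension with an element of $H^1(\R^n)$ whose norm is at most $C(n,K)\|h\|_{H^1}$.

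The verification splits into three ingredients. First, since $\phi$ and $\phi^{-1}$ are quasiconformal (hence $W^{1,n}_{loc}$ and satisfy Lusin's $(N)$-condition), the area formula applies, and the change of variables $x = \phi(y)$ transforms
\begin{equation*}
\int_{\R^n} (T_\phi h)(x)\, g(x)\, dx = \int_{\R^n} h(\phi^{-1}(x))\, J(x,\phi^{-1})\, g(x)\, dx = \int_{\R^n} h(y)\, (g\circ\phi)(y)\, dy.
\end{equation*}
Second, $\phi$ being a homeomorphism and $g\in C_c(\R^n)$ imply that $g\circ\phi$ is continuous with compact support $\phi^{-1}(\supp g)$; in particular $g\circ\phi \in C_c(\R^n) \subset VMO(\R^n)$, so Fefferman duality gives
\begin{equation*}
\left|\int_{\R^n} h\, (g\circ\phi)\, dy\right| \leq \|h\|_{H^1(\R^n)}\, \|g\circ\phi\|_{BMO(\R^n)}.
\end{equation*}
Third, because $\phi^{-1}$ is $K^{n-1}$-quasiconformal, Reimann's theorem applied to $\phi^{-1}$ (rather than to $\phi$ itself) bounds composition with the inverse of $\phi^{-1}$, namely $\phi$, giving $\|g\circ\phi\|_{BMO}\leq C(n,K)\,\|g\|_{BMO}$. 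Assembling the three inequalities produces the required bound.

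The only delicate point is the justification of the change of variables for a merely $L^1$ integrand $h$; this is standard for quasiconformal maps but relies on the regularity $\phi \in W^{1,n}_{loc}$ and the preservation of null sets. As a by-product, taking $g$ to approximate $\mathrm{sgn}(T_\phi h)$ one sees that $T_\phi h \in L^1$ with $\|T_\phi h\|_{L^1} \leq \|h\|_{L^1}$, which combined with the displayed estimate yields $T_\phi h \in H^1(\R^n)$ with the claimed quantitative control. No deeper obstacle is expected, since all substantial analytic content has been packaged into Reimann's theorem and Fefferman's duality, which we invoke as black boxes.
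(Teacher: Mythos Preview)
Your argument is correct and follows essentially the same route as the paper: both dualize Reimann's $BMO$ theorem through the identification $H^1=VMO^\ast$, reduce to $g\in C_c(\R^n)$ by density, and use the change-of-variables formula (valid for quasiconformal maps) to rewrite $\int (T_\phi h)\,g$ as $\int h\,(g\circ\phi)$. The paper packages this as ``$T_\phi$ is the adjoint of $C_\phi:VMO\to VMO$'' and invokes Schauder's theorem, while you carry out the same estimate explicitly on test functions; these are the same proof in slightly different dress.
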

\begin{proof}
We will prove that $T_\phi:H^1(\R^n)\to H^1(\R^n)$ is the adjoint of the composition operator $C_{\phi }:VMO(\R^n)\to VMO(\R^n)$, $C_{\phi }b = b\circ\phi $. The rest of the proof follows by Schauder's Theorem, 
$$
\|T_\phi\|_{H^1\to H^1}= \|C_{\phi }^\ast\|_{H^1\to H^1}\leq  \|C_{\phi } \|_{VMO\to VMO}\leq C(n,K)
$$ 
together with Reimann's previous result. Given $h\in H^1(\R^n)$, it suffices to see that if $h\in H^1(\R^n)$ and $g\in VMO(\R^n)$, then 
\begin{equation}\label{equality}
\langle T_\phi h, g\rangle = \langle h, C_{\phi }g\rangle
\end{equation}
where $\langle\cdot,\cdot\rangle$ denotes the $VMO-H^1$ duality. By density, we can assume that $g\in C_c(\R^n)$. But then \eqref{equality} becomes an equality of absolutely converging integrals,
$$\int_{\R^n} T_\phi h(x)\,g(x)\,dx=\int_{\R^n}h(y)\,g(\phi^{-1}(y))\,dy$$
which is easily seen to be true due to the change of variables formula. Thus, the claim follows.
\end{proof}

\noindent
We are now in position of proving Theorem \ref{mainH1}.

\begin{proof}[Proof of Theorem \ref{mainH1}]
According to \cite[Theorem 5]{CJMO3} (see also \cite{Re}), the Cauchy problem
$$
\begin{cases}
\dot\phi_t(x)=b(t,\phi_t(x))\\\phi_0(x)=x\end{cases}
$$ 
admits a unique flow $\phi_t:\R^n\to\R^n$ of $K_t$-quasiconformal maps, where
$$K_t\leq \exp\left((n-1)\int_0^t2\|S_A b(s,\cdot)\|_\infty\,ds\right),\hspace{1cm}0\leq t\leq T.$$
By Corollary \ref{H1sufficient}, we deduce that the transport operator $T_{\phi_t}$ maps measures with $H^1(\R^n)$ density boundedly into themselves. In particular, at every fixed time $t\in[0,T]$ we can define an $H^1(\R^n)$ function as follows,
$$h(t,\cdot)=T_{\phi_t}h_0,$$
or more explicitly $h(t,x)=h_0(\phi_t^{-1}(x))\,J(x,\phi_t^{-1})$, with norm
$$\|h(t,\cdot)\|_{H^1(\R^n)}\leq C(n,K_t)\,\|h_0\|_{H^1(\R^n)}.$$
In particular, $h\in L^\infty(0,T; H^1(\R^n))$, since $1\leq K_t\leq K_T$ for all $0\leq t\leq T$. It is an easy exercise to prove that $h(t,\cdot)$ solves the Cauchy problem \eqref{H1problem}. For uniqueness, we simply note that $b$ is in the assumptions of Theorem \ref{mainTHM}, which forces \eqref{H1problem} to have a unique measure valued solution. 
\end{proof}

\noindent
As in the $BMO$ setting, quasiconformality can also be characterized in $H^1$ terms.

\begin{corollary}\label{H1necessary}
Let $\phi:\R^n\to\R^n$ be a sense-preserving homeomorphism. Assume that the transport operator $T_\phi$ maps absolutely continuous measures with $H^1(\R^n)$ density boundedly into themselves. That is, there is a constant $C_0$ with the following property. If $d\mu=h\,dx$ with $h\in H^1(\R^n)$, then $T_\phi(d\mu)$ is absolutely continuous and 
$$\left\|\frac{d}{dx}\, T_\phi(d\mu)\right\|_{H^1(\R^n)}\leq C_0\,\|h\|_{H^1(\R^n)}.$$
Then $\phi$ must be $K$-quasiconformal, for some $K=K(C_0,n)\geq 1$.
\end{corollary}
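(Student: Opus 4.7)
The plan is to dualize the hypothesis using Fefferman's identification $H^1(\R^n)^{*}=BMO(\R^n)$, thereby passing from a boundedness statement for $T_\phi$ on $H^1$ to one for the composition operator $C_\phi$ on $BMO$, and then to invoke the converse to Reimann's theorem from \cite{astmich} that characterizes quasiconformality through exactly such a $BMO$-boundedness.

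First I would reinterpret the assumption as a bounded linear operator $T_\phi:H^1(\R^n)\to H^1(\R^n)$ of norm at most $C_0$, so its Banach adjoint $T_\phi^{*}:BMO(\R^n)\to BMO(\R^n)$ is automatically bounded with the same norm. The crucial step is then to identify $T_\phi^{*}$ with the composition operator $C_\phi\, b = b\circ\phi$, essentially by running the computation in the proof of Corollary~\ref{H1sufficient} in reverse. For a test function $g\in C_c(\R^n)\subset VMO(\R^n)$ and $h\in H^1(\R^n)$ the $H^1$--$BMO$ pairing reduces to the absolutely convergent integral $\int T_\phi h\cdot g\,dx$, and the hypothesis that $T_\phi(h\,dx)=\phi_{\sharp}(h\,dx)$ combined with the pushforward change-of-variables formula yields
$$\int T_\phi h(x)\,g(x)\,dx \;=\; \int h(y)\,g(\phi(y))\,dy,$$
which forces $T_\phi^{*}g = g\circ\phi$ for every $g\in C_c(\R^n)$. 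In particular $\|g\circ\phi\|_{BMO}\le C_0\,\|g\|_{BMO}$ and, by density of $C_c(\R^n)$ in $VMO(\R^n)$, one obtains a bounded composition operator $C_\phi\colon VMO(\R^n)\to BMO(\R^n)$ that is simultaneously the restriction of the $BMO\to BMO$ bounded operator $T_\phi^{*}$.

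With this bound in hand, the remaining step is to invoke the converse of Reimann's theorem cited from \cite{astmich} to conclude that $\phi$ is $K$-quasiconformal for some $K=K(n,C_0)$. The main obstacle I foresee is that the version quoted in the paper requires $C_\phi$ to be a \emph{topological isomorphism} of $BMO$, whereas the dualization above only yields a one-sided bound. To close the gap one can either appeal to the stronger one-sided versions of the characterization already available in the literature (boundedness alone of $C_\phi$ on $BMO$ suffices), or apply the same dualization scheme to the inverse homeomorphism $\phi^{-1}$, after first observing that the hypothesis implicitly forces $\phi^{-1}$ to satisfy a Lusin $(N)$ condition (since $\phi_{\sharp}(h\,dx)$ is absolutely continuous for every $h\in H^1$), which in combination with the change-of-variables machinery should let one recover the corresponding boundedness for $T_{\phi^{-1}}$ and thus produce the needed $BMO$-isomorphism.
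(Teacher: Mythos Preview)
Your proposal is correct and follows essentially the same route as the paper: dualize the $H^1$-boundedness of $T_\phi$ via Fefferman's duality to obtain a $BMO$-bound for $C_\phi$, identify the adjoint with $b\mapsto b\circ\phi$ through the change-of-variables formula on a dense class of nice test data, and then invoke the converse to Reimann's theorem from \cite{astmich}. The paper's proof differs only cosmetically, defining the functional $(T_\phi b)^\ast$ directly for arbitrary $b\in BMO$ and testing it against smooth, compactly supported, zero-integral $h$, rather than starting from $g\in C_c$ as you do.

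Regarding your worry about needing a two-sided isomorphism: this is not an actual gap. The paper simply cites \cite[Theorem 3]{astmich}, which is precisely the one-sided statement (boundedness of $C_\phi$ on $BMO$ for a sense-preserving homeomorphism already forces quasiconformality). Your first suggested resolution---that one-sided boundedness suffices---is exactly what the paper uses, so the detour through $\phi^{-1}$ and Lusin $(N)$ is unnecessary.
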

\begin{proof}
By assumptin, given any $h\in H^1(\R^n)$, the measure $T_\phi(h\,dx)$ is absolutely continuous with $H^1(\R^n)$ density. Thus, for each $b\in BMO(\R^n)$ we can define a linear functional $(T_\phi b)^\ast :H^1(\R^n)\to\R$ as follows,
$$
\langle(T_\phi b)^\ast,h\rangle := \langle b,T_\phi h\rangle.
$$
By our assumption, $(T_\phi b)^\ast$ is well defined and bounded, with norm 
$$\|(T_\phi b)^\ast\|_{BMO(\R^n)}\leq C(n)\, C_0\,\|b\|_{BMO(\R^n)}$$
where $C(n)$ comes from Fefferman's duality. Therefore we can identify $(T_\phi b)^\ast$ with a $BMO(\R^n)$ function. On the other hand, if $h$ is smooth, compactly supported, and has zero integral, then $T_\phi h$ is a nice finite Borel measure with compact support, and the action $\langle b,T_\phi  h\rangle$ is an absolutely converging integral. By the definition of image measure one has
$$\langle b,T_\phi  h\rangle = \int b\, d(\phi_\sharp( h\,dx))=\int b(\phi(x))\,h(x)\,dx$$
whence
$$
\langle(T_\phi b)^\ast,h\rangle=\int b(\phi(x))\,h(x)\,dx. 
$$
It then follows that $(T_\phi b)^\ast=C_\phi b$, modulo additive constants. Using again that $T_\phi$ acts boundedly on $H^1(\R^n)$, and combining it with \cite[Theorem 3]{astmich}, the proof follows. 
\end{proof}

\noindent
As a consequence of Corollary \ref{H1necessary}, we deduce that if for a vector field $b$ the problem \eqref{H1problem} admits a unique weak solution in $H^1(\R^n)$, then either $b$ generates flows of quasiconformal maps, or it does not admit even a flow of sense-preserving homeomorphisms. A similar phenomena occurs with the transport equation in the $BMO(\R^n)$ setting (see \cite{CJMO3}).

\noindent
 A. Clop: albertcp@mat.uab.cat\\
H. Jylh\"a: hjjylh@gmail.com\\ 
J. Mateu: mateu@mat.uab.cat\\
J. Orobitg: orobitg@mat.uab.cat\\
\\
Departament de Matem\`atiques\\
Facultat de Ci\`encies, Campus de la U.A.B.\\
08193-Bellaterra (Barcelona)\\
Catalonia\\


\begin{thebibliography}{10}

\bibitem{A}  {\sc{L. Ambrosio}}, \emph{Transport equation and Cauchy problem for BV vector fields}, Invent. Math. 158 (2004), no. 2, 227--260.

\bibitem{AB} {\sc{L. Ambrosio, P. Bernard}}, \emph{Uniqueness of signed measures solving the continuity equation for Osgood vector fields} (English summary) , Atti Accad. Naz. Lincei Cl. Sci. Fis. Mat. Natur. Rend. Lincei (9) Mat. Appl. 19 (2008), no. 3, 237--245. 
 
\bibitem{AC} {\sc{L. Ambrosio, G. Crippa}}, \emph{ Existence, uniqueness, stability and differentiability properties of the flow associated to weakly differentiable vector fields}, Transport equations and multi-D hyperbolic conservation laws, 3?57, Lect. Notes Unione Mat. Ital., 5, Springer, Berlin, 2008. 

\bibitem{AGS} {\sc L. Ambrosio, N. Gigli and G. Savar\'e}, {\em Gradient flows in metric spaces and in the space of probability measures}, Lectures in Mathematics ETH Z\"urich. Birkh\"auser Verlag, Basel, 2005.

\bibitem{astmich} {\sc{K. Astala}}, \emph{A remark on quasiconformal mappings and BMO-functions},  Michigan Math. J. 30 (1983), no. 2, 209--212.

\bibitem{BH} {\sc{F. Bernicot, T. Hmidi}}, \emph{ On the global well-posedness for Euler equations with unbounded
vorticity}, Dyn. Partial Differ. Equ. 12 (2015), 127--155.

\bibitem{BK} {\sc{F. Bernicot , S. Keraani}}, \emph{ On the global well-posedness of the 2D Euler equations
for a large class of Yudovich type data}, Ann. Sci. Ec. Norm. Super. (4) 47 (2014), 559--576.

\bibitem{BLL} {\sc{A. Bertozzi, T. Laurent, F. Léger}}, \emph{ Aggregation and spreading via the Newtonian potential: the dynamics of patch solutions}, Math. Models Methods Appl. Sci. 22 (2012), suppl. 1, 1140005, 39 pp. 

\bibitem{BLi} {\sc{J. Bourgain, D. Li}},\emph{Strong ill-posedness of the incompressible Euler equation in borderline Sobolev spaces}, Invent. Math. 201 (2015), no. 1, 97--157.  

\bibitem{CaffMcC} {\sc {L. Caffarelli and R. McCann}}, {\em Free boundaries in optimal transport and Monge-Amp\'ere obstacle problems}. Ann. of Math. (2) 171 (2010), no. 2, 673--730.

\bibitem{CC} {\sc{F. Cipriano, A. Cruzeiro}}, \emph{ Flows associated with irregular $\R^d$-vector fields}, J. Differential Equations 219 (2005), no. 1, 183--201.

\bibitem{CJMO}  {\sc{ A. Clop, R.  Jiang, J. Mateu, J. Orobitg}} ,\emph{Linear transport equations for vector fields with subexponentially integrable divergence}, Calc. Var. Partial Differential Equations 55 (2016), no. 1, Paper No. 21, 30 pp.

\bibitem{CJMO2}   {\sc{ A. Clop, R.  Jiang, J. Mateu, J. Orobitg}},\emph{ Flows for non-smooth vector fields with subexponentially integrable divergence}, J. Differential Equations 261 (2016), no. 2, 1237--1263. 

\bibitem{CJMO3}  {\sc{ A. Clop, R.  Jiang, J. Mateu, J. Orobitg}}, \emph{A note on transport equations in quasiconformally invariant spaces}, to appear in Adv. Calc. Var.

\bibitem{CCS} {\sc{M. Colombo, G. Crippa, S. Spirito}}, \emph{ Logarithmic estimates for continuity equations}, Netw. Heterog. Media 11 (2016), no. 2, 301--311.

\bibitem{DPL} {\sc{ R. J. DiPerna , P. L. Lions }},\emph{Ordinary differential equations, transport theory and Sobolev spaces}, Invent. Math. 98 (1989), 511--547.

\bibitem{Ekeland} {\sc I. Ekeland}, {\em Existence, uniqueness and efficiency of equilibrium in hedonic markets with multidimensional types}. Econom. Theory 42 (2010), no. 2, 275--315.

\bibitem{Feff} {\sc{ C. Fefferman, E. M. Stein}}, \emph{$H^p$ spaces of several variables}, Acta Math. 129 (1972), 137--193.

\bibitem{HJ} {\sc H. Jylh\"a}, {\em An optimal transportation problem related to the limits of solutions of local and nonlocal p-Laplace-type problems}. Rev. Mat. Complut. 28 (2015), no. 1, 85--121.

\bibitem{IM} {\sc{T. Iwaniec, G. Martin}}, \emph{Geometric function theory and non-linear analysis},  Oxford Mathematical Monographs. The Clarendon Press, Oxford University Press, New York, 2001.

\bibitem{Mu}  {\sc{ P. B. Mucha}},\emph{   Transport equation: extension of classical results for $\diver b\in BMO$}, J. Differential Equations 249 (2010), no. 8, 1871--1883.

\bibitem{Re2} {\sc{H.M. Reimann}}, \emph{Functions of bounded mean oscillation and quasiconformal mappings},
Comment. Math. Helv. 49 (1974), 260--276.

\bibitem{Re} {\sc{H.M. Reimann}}, \emph{Ordinary differential equations and quasiconformal mappings}, Invent.
Math. 33 (1976), 247--270.

\bibitem{Santambrogio} {\sc F.~Santambrogio}, {\em Optimal transport for applied mathematicians}.
Calculus of variations, PDEs, and modeling. Progress in Nonlinear Differential Equations and their Applications, 87. Birkh\"auser/Springer, Cham, 2015.

\bibitem{Seis} {\sc C.~Seis}, {\em A quantitative theory for the continuity equation}. Preprint, available on arXiv:1602.02931

\bibitem{Villani2} {\sc C.~Villani}, {\em Optimal transport. Old and new}. Grundlehren der Mathematischen Wissenschaften, 338. Springer-Verlag, Berlin, 2009.
	
\bibitem{Villani1} {\sc C.~Villani}, {\em Topics in optimal transportation}.
Graduate Studies in Mathematics 58. American Mathematical Society, Providence, RI, 2003.

\bibitem{Y} {\sc{V. I. Yudovich}}, \emph{Uniqueness theorem for the basic nonstationary problem in the dynamics
of an ideal incompressible fluid}, Math. Res. Lett. 2 (1995), 27--38.

\end{thebibliography}
\end{document}